\documentclass[12 pt]{amsart}
\usepackage{full page}
\usepackage{graphicx}
\usepackage{amsmath}
\usepackage{amssymb}
\usepackage{mathrsfs}
\usepackage{bbm}
\usepackage{tikz-cd}
\usepackage{enumerate}
\usepackage{url}
\usepackage{amsthm}
\usepackage{mathdots}
\definecolor{mypink}{cmyk}{0, 0.7808, 0.4429, 0.1412}
\definecolor{lightblue}{cmyk}{1,.4,.4,.1}
\definecolor{myred}{cmyk}{0,.82,.87,.25}
\definecolor{myblue}{cmyk}{.81,.41,0,.09}
\usepackage[colorlinks = true,
	linkcolor = black,
	urlcolor = mypink,
	citecolor = myblue]{hyperref}
\usetikzlibrary{cd}
\title{Universal additive Chern classes and a GRR-type theorem}
\author{Eoin Mackall}
\address{Mathematical \& Statistical Sciences, University of Alberta, Edmonton, CANADA}
\email{mackall \emph{at} ualberta.ca}
\urladdr{\url{www.ualberta.ca/~mackall}}
\date{\today}
\keywords{Chern classes; K-theory}
\subjclass[2010]{19E20}
\newtheorem{thm}{Theorem}[section]

\newtheorem{prop}[thm]{Proposition}
\newtheorem{cor}[thm]{Corollary}
\newtheorem{lem}[thm]{Lemma}

\theoremstyle{definition}
\newtheorem{defn}[thm]{Definition}
\newtheorem{exmp}[thm]{Example}
\newtheorem{rmk}[thm]{Remark}
\begin{document}
\begin{abstract}
We construct a functor, from the category of schemes to the category of graded rings, that is an initial object for having a theory of Chern classes with an additive first Chern class. For any scheme $X$, the graded ring that our functor associates to $X$ is related to the associated graded ring of the $\gamma$-filtration on the Grothendieck ring of finite rank locally free sheaves on $X$ via a Grothendieck-Riemann-Roch type theorem.
\end{abstract}
\maketitle
\noindent\textbf{Conventions}. We fix an arbitrary base field $k$. A variety over $k$, or simply a variety when the field $k$ is clear from context, is a separated scheme of finite type over $k$.

In this text we write $\mathbb{Z}_{\geq 0}=\{0,1,2,\ldots\}$ for the set of all nonnegative integers. When we say that a ring is graded, we mean that it is a $\mathbb{Z}_{\geq 0}$-graded ring.
\section{Introduction}
A number of well-studied functors throughout algebraic geometry are equipped with a satisfactory notion of Chern classes. Classical examples of such functors include the even $\ell$-adic cohomology $\bigoplus_{i}\mathrm{H}^{2i}(-,\mathbb{Z}_\ell(i))$ for smooth and quasiprojective varieties defined over fields of characteristic $p\neq \ell$ and the Chow ring $\mathrm{CH}(-)$ of cycles modulo rational equivalence for smooth varieties over arbitrary base fields. A lesser known example is the associated graded ring for the $\gamma$-filtration on the Grothendieck ring $K(-)$ of finite rank locally free sheaves; this example is the most general, however, and applies for any noetherian scheme.

In this paper, we formalize and uniformize what it means for a functor to have a theory of Chern classes where the first Chern class is additive. We do this by producing a contravariant functor $B(-)$, from any subcategory of the category of schemes that is closed under taking projective bundles to the category of graded rings, that is the universal receptor of Chern classes in a proper sense, see Proposition \ref{up}.

Surprisingly, the functor $B(-)$ that we construct is most naturally related to the associated graded ring $\mathrm{gr}_\gamma K(-)$ for the $\gamma$-filtration on the ring $K(-)$. In Theorem \ref{grr}, we make this precise by showing that there is a Grothendieck-Riemann-Roch (GRR)  without denominators type relation that holds between these rings. As an immediate consequence to this theorem we give Corollary \ref{cor} that says the functor $B(-)$ calculates the Chow ring of a smooth surface with integral coefficients and the Chow ring of an arbitrary smooth variety rationally.

This paper consists of three sections. The first section focuses on the construction of the functor $B(-)$ and on some of its functorial properties. The second section is more concrete; the focus of this section is on proving specific formula between Chern classes and on outlining techniques that one can use to study the ring $B(X)$ for a given scheme $X$. Our last section is devoted to the main result of this paper, Theorem \ref{grr}.\\

\noindent\textbf{Acknowledgements}. I'd like to thank an anonymous referee for giving me the motivation to revisit this text. I would also like to thank this referee for suggesting that I look again at the source \cite{MR0354655}. I think that these two suggestions greatly improved the exposition and content of the original version of this paper.

\section{Construction and fundamental properties}\label{cfp}
Throughout this section, we let $X$ be an arbitrary scheme. We use the notation $K(X)$ for the Grothendieck ring of finite rank locally free sheaves on $X$. For any morphism of schemes $f:X\rightarrow Y$, we write $f^*:K(Y)\rightarrow K(X)$ for the morphism induced by the pullback of sheaves of modules. Recall \cite{MR0265355} that the assignment \[X\rightsquigarrow K(X) \quad \mbox{and}\quad f\rightsquigarrow f^*\] defines a contravariant functor $K:\mathsf{Sch}\rightarrow \mathsf{Ring}$ from the category of schemes to the category of (commutative) rings. Our goal, for the rest of this section, is to construct a graded ring $B(X)=\bigoplus_{i\geq 0} B^i(X)$ that acts as a universal receptor of Chern classes from $K(X)$. More precisely, we aim to construct a contravariant functor $B:\mathsf{Sch}\rightarrow \mathsf{GrRing}$ and a collection of natural transformations $c_i^B:K\rightarrow B^i$ (considered as functors to the category $\mathsf{Set}$ of sets) having the following universal property.

\begin{prop}\label{up}
	Let $A:\mathsf{Sch}\rightarrow \mathsf{GrRing}$ be any contravariant functor and suppose that, for every integer $i\geq 0$, there exists a natural transformation $c_i^A:K\rightarrow A^i$, of functors from $\mathsf{Sch}$ to $\mathsf{Set}$, having the following properties:
	\begin{enumerate}[\normalfont (1)]
		\item for all finite rank locally free sheaves $\mathcal{F}$ on $X$ we have $c_0^A(\mathcal{F})=1$;
		\item the term $c_i^A(\mathcal{F})=0$ is vanishing for all integers $i$ greater than the rank $\mathrm{rk}(\mathcal{F})$ of $\mathcal{F}$;
		\item for any short exact sequence of finite rank locally free sheaves $0\rightarrow \mathcal{E}\rightarrow \mathcal{G}\rightarrow \mathcal{F}\rightarrow 0$ there is a Whitney sum relation $c_i^A(\mathcal{G})=\sum_{j=0}^i c_{i-j}^A(\mathcal{F})c_j^A(\mathcal{E})$;
		\item for any pair of invertible sheaves $\mathcal{L},\mathcal{L}'$ there is a relation $c_1^A(\mathcal{L}\otimes \mathcal{L}')=c_1^A(\mathcal{L})+c_1^A(\mathcal{L}')$;
		\item for any finite rank locally free sheaf $\mathcal{E}$ over $X$ with projective bundle $P=\mathbb{P}(\mathcal{E})\rightarrow X$, the pullback map $A(X)\rightarrow A(P)$ is injective.
	\end{enumerate}
	Then there exists a natural transformation $b_A:B\rightarrow A$ of functors which is completely determined by the rule $b_A(c_i^B([\mathcal{F}]))= c_i^A([\mathcal{F}])$.
\end{prop}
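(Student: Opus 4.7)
My plan is to construct $B$ by an explicit presentation and then deduce the universal property formally. For each scheme $X$, I would let $B(X)$ be the quotient of the free graded-commutative $\mathbb{Z}$-algebra on generators $c_i(\mathcal{F})$, one for each finite-rank locally free sheaf $\mathcal{F}$ on $X$ and each $i\geq 1$, with $\deg c_i(\mathcal{F})=i$, subject to the relations encoded by (1)--(4): $c_0(\mathcal{F})=1$, the vanishing $c_i(\mathcal{F})=0$ for $i>\mathrm{rk}(\mathcal{F})$, the Whitney sum identities on short exact sequences, and the additivity $c_1(\mathcal{L}\otimes\mathcal{L}')=c_1(\mathcal{L})+c_1(\mathcal{L}')$. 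A contravariant functor structure is obtained by sending a morphism $f:X\rightarrow Y$ to the ring map defined on generators by $c_i(\mathcal{F})\mapsto c_i(f^*\mathcal{F})$; this is well-defined because pullback of sheaves preserves short exact sequences, tensor products, and ranks, and therefore respects the imposed relations.

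The next step is to verify that $B$ itself satisfies condition (5); this is where I expect the main technical obstacle to lie. For any rank-$r$ locally free sheaf $\mathcal{E}$ on $X$ with projective bundle $\pi:P=\mathbb{P}(\mathcal{E})\rightarrow X$, I would establish a projective bundle formula of the form $B(P)\cong B(X)[\xi]/(\xi^r+\pi^*c_1^B(\mathcal{E})\xi^{r-1}+\cdots+\pi^*c_r^B(\mathcal{E}))$ with $\xi=c_1^B(\mathcal{O}_P(1))$. The degree-$r$ relation itself is forced by applying Whitney sum (3) together with additivity (4) to the tautological surjection or subsheaf on $P$. That the resulting natural $B(X)$-module map is an isomorphism (and in particular that the pullback is injective) requires more care: the issue is that (5) is an injectivity condition rather than a relation to be imposed, so it must be derived from the presentation. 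I would expect the argument to proceed by exhibiting a left inverse on the module level via a formal ``push-forward'' constructed out of the Chern class data available on $P$, in the spirit of the classical splitting principle.

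With $B$ constructed and (5) verified, the proof of the universal property itself is formal. Given any $A$ satisfying (1)--(5), I would define $b_A(X):B(X)\rightarrow A(X)$ on generators by $b_A(c_i^B(\mathcal{F}))=c_i^A(\mathcal{F})$ and extend multiplicatively. Well-definedness holds because the defining relations (1)--(4) of $B(X)$ are precisely the ones postulated to hold in $A(X)$. Naturality of $b_A$ with respect to pullback along any $f:X\rightarrow Y$ follows from naturality of each $c_i^A$ and of each $c_i^B$: the two ring maps $b_A(X)\circ f^*$ and $f^*\circ b_A(Y)$ agree on the generators $c_i^B(\mathcal{F})$, and therefore on all of $B(Y)$. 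Uniqueness of $b_A$ is immediate since it is prescribed on a set of ring generators for $B(X)$.
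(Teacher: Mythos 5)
Your construction of $B(X)$ as the bare presentation $\mathbb{Z}[R_X]/I_X$ with only the relations (1)--(4) imposed is not the ring the paper works with, and the step you yourself flag as the main obstacle --- verifying that this presentation satisfies (5) --- is where the plan breaks down. In the naive quotient there is, for instance, no relation identifying $c_1(\mathcal{F}\otimes\mathcal{G})$ with $\mathrm{rk}(\mathcal{G})\,c_1(\mathcal{F})+\mathrm{rk}(\mathcal{F})\,c_1(\mathcal{G})$ when both sheaves have rank at least $2$: relations (3) and (4) say nothing about tensor products of higher-rank sheaves. Yet after pulling back to a splitting tower $P\rightarrow X$ this identity \emph{is} forced by (3) and (4), so the pullback from the naive presentation on $X$ to the one on $P$ kills the difference of these two classes and cannot be injective. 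No projective bundle formula or formal push-forward will rescue this; the failure of (5) is intrinsic to the presentation. (Lemma \ref{sym} makes this explicit: the tensor-product formula is \emph{proved} by passing to a splitting chain, i.e., it is not a formal consequence of (1)--(4) over $X$ itself.)

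The paper's fix is to define $B(X)$ as $\mathbb{Z}[R_X]/\ker(f_X)$, where $f_X$ maps into the colimit of the rings $\mathbb{Z}[R_{P_n}]/I_{P_n}$ over all chains of projective bundles over $X$; that is, one additionally imposes every relation that becomes visible after pullback to some splitting tower, so that condition (5) holds for $B$ essentially by construction. The price is that your verification of well-definedness of $b_A$ is then no longer purely formal: $\ker(f_X)$ is in general strictly larger than $I_X$, and to see that an element of $\ker(f_X)$ maps to $0$ in $A(X)$ one must pull it back to a finite stage $P_n$ of some chain where it lies in $I_{P_n}$, use (1)--(4) for $A$ to conclude it dies in $A(P_n)$, and then use hypothesis (5) for $A$, applied along the chain, to conclude it was already zero in $A(X)$. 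Your argument never invokes (5) for $A$ at all, which is the telltale sign that it establishes the universal property of a different (and defective) object rather than of the functor $B$ in the statement.
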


We write $R_X$ for the set of symbols $\{c_i^B(\mathcal{F})\}$ varying over all integers $i\geq 0$ and over all finite rank locally free sheaves $\mathcal{F}$ on $X$. The algebra $\mathbb{Z}[R_X]$ generated by these symbols is naturally graded with each symbol $c_i^B(\mathcal{F})$ having degree-$i$. Define $I_X\subset \mathbb{Z}[R_X]$ to be the ideal having generators:
\begin{enumerate}[\textperiodcentered]
\item $c_0^B(\mathcal{F})-1$ for all $\mathcal{F}$
\item $c_i^B(\mathcal{F})$ for all integers $i>\mathrm{rk}(\mathcal{F})$
\item $c_i^B(\mathcal{G})-\sum_{j=0}^i c_{i-j}^B(\mathcal{F})c_j^B(\mathcal{E})$ for any short exact sequence $0\rightarrow \mathcal{E}\rightarrow \mathcal{G}\rightarrow \mathcal{F}\rightarrow 0$
\item $c_1^B(\mathcal{L}\otimes \mathcal{L}')-c_1^B(\mathcal{L})-c_1^B(\mathcal{L}')$ for any pair of invertible sheaves $\mathcal{L},\mathcal{L}'$.
\end{enumerate} We denote by $[c_i^B(\mathcal{F})]$ the class of $c_i^B(\mathcal{F})$ in $\mathbb{Z}[R_X]/I_X$. Note that $I_X$ is a homogeneous ideal so that the quotient $\mathbb{Z}[R_X]/I_X$ is graded with a well-defined notion of degree.

For any other scheme $Y$ and for any morphism $f:X\rightarrow Y$ there is a natural map $$f^*:\mathbb{Z}[R_Y]/I_Y\rightarrow \mathbb{Z}[R_X]/I_X$$ defined by $f^*[c_i^B(\mathcal{F})]:=[c_i^B(f^*\mathcal{F})]$.

We write $\mathbb{P}^X$ for the directed set of maps to $X$ made up of chains of projective bundles. By this we mean $\mathbb{P}^X$ is the set of finite sequences of maps $$X\leftarrow P_1\leftarrow P_2\leftarrow \cdots$$ where $P_1\rightarrow X$ is a composition of projections from successive projective bundles, $P_2\rightarrow P_1$ is likewise a chain of projective bundles over $P_1$, and so on. One chain dominates another chain if there is a commutative ladder with each vertical arrow a chain of projective bundles. \[\begin{tikzcd} X & \arrow{l} P_1 & \arrow{l} P_2 & \arrow{l} \cdots\\
X\arrow[equals]{u} & \arrow{l}\arrow{u} Q_1 & \arrow{l}\arrow{u}Q_2 & \arrow{l} \cdots
\end{tikzcd}\] In the above diagram the bottom sequence, call it $S_Q$, dominates the top, $S_P$, and we would write $S_Q\geq S_P$. Any two chains have a chain that dominates them. To see this, let $$X\leftarrow P_1\leftarrow P_2\leftarrow \cdots$$ $$X\leftarrow Q_1\leftarrow Q_2\leftarrow \cdots$$ be two such chains. Then, by taking fiber products, a third such chain that dominates the two given is $$X\leftarrow P_1\times_X Q_1\leftarrow P_2\times_{P_1\times_X Q_1} Q_2\leftarrow \cdots.$$  

A chain $P$ of chains of projective bundles $$X\leftarrow P_1\leftarrow P_2\leftarrow \cdots$$ determines a directed system using the natural maps defined above $$\mathbb{Z}[R_X]/I_X\rightarrow \mathbb{Z}[R_{P_1}]/I_{P_1}\rightarrow \mathbb{Z}[R_{P_2}]/I_{P_2}\rightarrow \cdots.$$ Denoting the limit of this directed system by $\mathbb{Z}[R^\infty_P]=\varinjlim \mathbb{Z}[R_{P_i}]/I_{P_i}$, we get a directed system of the rings $\mathbb{Z}[R^\infty_P]$ over all chains $P$ in the set $\mathbb{P}^X$.

\begin{defn} We define the ring $B(X)$ as the quotient $$\mathbb{Z}[R_X]/\mathrm{ker}(f_X)$$ where $f_X:\mathbb{Z}[R_X]\rightarrow \varinjlim_{P\in \mathbb{P}^X} \mathbb{Z}[R_P^\infty]$ is the canonical map.
\end{defn}

From now on we write $[c_i^B(\mathcal{F})]$ only for the class of $c_i^B(\mathcal{F})$ in $B(X)$. The ring $B(X)$ inherits a canonical grading where, for any integer $i\geq 0$, the subgroup $B^i(X)\subset B(X)$ that consists of degree-$i$ elements is generated by those products $$[c_{i_1}^B(\mathcal{F}_1)]\cdots [c_{i_n}^B(\mathcal{F}_n)]$$ of degree $i_1+\cdots +i_n= i$. Additionally, for any other scheme $Y$ and for any morphism $f:X\rightarrow Y$ there is an induced morphism of graded rings $$f^*:B(Y)\rightarrow B(X)$$ uniquely determined by the formula $f^*[c_i^B(\mathcal{F})]=[c_i^B(f^*\mathcal{F})]$ as before. The assignment \[X\rightsquigarrow B(X)\quad \mbox{and} \quad f\rightsquigarrow f^*\] thus gives rise to a contravariant functor $B:\mathsf{Sch}\rightarrow \mathsf{GrRing}$.

We write $1+tB(X)[[t]]$ for the group of invertible formal power series in a variable $t$ with coefficients in $B(X)$. The following definition is key to Proposition \ref{up}.

\begin{defn}
	For any scheme $X$, the total Chern class is the homomorphism $$c^B_t:K(X)\rightarrow 1+tB(X)[[t]]$$ that's uniquely determined by the formula $$c^B_t(x)= 1+[c^B_1(\mathcal{F})]t+[c^B_2(\mathcal{F})]t^2+\cdots$$ whenever $x=[\mathcal{F}]$ is the class of a locally free sheaf $\mathcal{F}$.
	
	For any $i\geq 0$, we define the $i$th Chern class as the natural transformation $c_i^B:K\rightarrow B^i$ induced by taking the degree-$i$ component of the total Chern class homomorphism.
\end{defn}

It's not difficult to show that the functor $B$ and the $i$th Chern classes $c_i^B$ have the universal property described in Proposition \ref{up}; we leave this to the reader.

\begin{rmk}\label{catchall}
Often times, the functor $B$ will be too restrictive to be of much use. What we mean by this is that, it will often be beneficial to consider an analog of $B$ that is universal in the sense of Proposition \ref{up} but, only among all functors coming from a strictly smaller category $\mathsf{C}$ than the category $\mathsf{Sch}$ of all schemes. Such a functor can be constructed by considering in our construction of $B$, and in the universal property of Proposition \ref{up}, only those chains of projective bundles that exist inside $\mathsf{C}$.

Denote by $B_{\mathsf{C}}$ the functor constructed in this way that is universal among all such functors coming from $\mathsf{C}$. Assume the category $\mathsf{C}$ consists exactly of those schemes having a property $\mathcal{P}$ that is closed in the sense: if a scheme $X$ with property $\mathcal{P}$ has a projective bundle $P\rightarrow X$, then the scheme $P$ also has property $\mathcal{P}$. For example, $\mathcal{P}$ could be the property of being a variety, or of being noetherian, or of being a regular variety. Then, in this setting, the rings $B(X)$ and $B_{\mathsf{C}}(X)$ will be isomorphic for any scheme $X$ contained in $\mathsf{C}$.
\end{rmk}

The rest of this section is devoted to studying those properties of the functor $B$ that are most similar to properties of a cohomology theory. Although the functor $B$ turns out not to be a cohomology theory itself (even when restricted to the category of smooth and projective varieties), it does share a number of properties that are typical of a cohomology theory (e.g.\ homotopy invariance and continuity).

\begin{lem}\label{keylem}
Let $X$ and $Y$ be schemes and let $f:X\rightarrow Y$ be a morphism between them. Assume that $f^*:K(Y)\rightarrow K(X)$ is surjective. Then $f^*:B(Y)\rightarrow B(X)$ is surjective. 
\end{lem}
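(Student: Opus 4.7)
The plan is to exploit the fact that $B(X)$ is generated, as a $\mathbb{Z}$-algebra, by the classes $[c_i^B(\mathcal{G})]$ with $\mathcal{G}$ a finite rank locally free sheaf on $X$ and $i\geq 0$. Since $f^*$ is a ring homomorphism, it suffices to show that each such generator lies in the image of $f^*\colon B(Y)\to B(X)$.

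Fix such a sheaf $\mathcal{G}$ on $X$. First, using the hypothesis that $f^*\colon K(Y)\to K(X)$ is surjective, I would choose finite rank locally free sheaves $\mathcal{E}$ and $\mathcal{F}$ on $Y$ with $[\mathcal{G}]=[f^*\mathcal{E}]-[f^*\mathcal{F}]$ in $K(X)$. Note that $\mathcal{G}$ itself need not be a pullback; the identity only holds after passing to the Grothendieck group. The key step is then to apply the total Chern class $c_t^B\colon K(X)\to 1+tB(X)[[t]]$, which is a group homomorphism into a multiplicative group by virtue of the Whitney sum relation built into $B$. Transporting the above identity through this homomorphism yields
$$c_t^B(\mathcal{G})=c_t^B(f^*\mathcal{E})\cdot c_t^B(f^*\mathcal{F})^{-1}=f^*\bigl(c_t^B(\mathcal{E})\cdot c_t^B(\mathcal{F})^{-1}\bigr),$$
where the inverse is a well-defined element of $1+tB(Y)[[t]]$ because $c_t^B(\mathcal{F})$ has constant term $1$, and the second equality is the naturality relation $c_t^B\circ f^*=f^*\circ c_t^B$ applied coefficient-wise. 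Extracting the coefficient of $t^i$ exhibits $[c_i^B(\mathcal{G})]$ as $f^*(\alpha_i)$ for an explicit homogeneous $\alpha_i\in B^i(Y)$, and we are done.

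The argument is essentially formal, so I do not anticipate any serious obstacle. The only points that warrant a moment of care are that $c_t^B$ is indeed a group homomorphism on the whole of $K(X)$, which is guaranteed by the Whitney sum relation being a defining relation of $B$, and that $c_t^B(\mathcal{F})^{-1}$ is genuinely available as a power series over $B(Y)$, which is immediate from its unit constant term. The role of the $K$-theoretic hypothesis is precisely to let us trade an arbitrary sheaf on $X$ for a formal difference of pullbacks, after which the Chern class machinery transports the entire computation back to $B(Y)$.
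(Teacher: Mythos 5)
Your proof is correct and is essentially the paper's argument: the paper likewise reduces to hitting each generator $[c_i^B(\mathcal{G})]$, picks a preimage $x\in K(Y)$ of $[\mathcal{G}]$, and concludes from the naturality square $c_i^B\circ f^*=f^*\circ c_i^B$; your computation with $c_t^B$ and the difference $[\mathcal{E}]-[\mathcal{F}]$ is just that naturality statement unpacked explicitly.
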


\begin{proof}
It suffices to show that each class $[c_i^B(\mathcal{F})]$ is in the image of $f^*$ as $\mathcal{F}$ ranges over all finite rank locally free sheaves on $X$, and $i$ ranges over all integers $i\geq 0$. Since the following diagram commutes for any $i\geq 0$,
\[\begin{tikzcd}K(Y)\arrow{r}{f^*}\arrow{d}{c_i^B} & K(X)\arrow{d}{c^B_i}\\ B^i(Y)\arrow{r}{f^*} & B^i(X) \end{tikzcd}\] the lemma follows from observing that there is a class $x$ in $K(Y)$ mapping to $\mathcal{F}$ under $f^*$.
\end{proof}

\begin{lem}\label{homotopy}
Let $X$ be a scheme and let $E$ be a vector bundle over $X$ of finite rank with structure map $\pi:E\rightarrow X$. Assume the pullback $\pi^*:K(X)\rightarrow K(E)$ is an isomorphism. Then $\pi^*:B(X)\rightarrow B(E)$ is an isomorphism.
\end{lem}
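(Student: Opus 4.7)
The proof should split into surjectivity and injectivity of $\pi^*:B(X)\to B(E)$.

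For surjectivity, the plan is to apply Lemma \ref{keylem} directly: since $\pi^*:K(X)\to K(E)$ is assumed to be an isomorphism, in particular it is surjective, and so the conclusion of the previous lemma tells us that $\pi^*:B(X)\to B(E)$ is surjective. This part is essentially free.

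For injectivity, the idea I would use is the zero section. Any vector bundle $\pi:E\to X$ of finite rank, arising from a finite rank locally free sheaf $\mathcal{E}$ via $E=\mathrm{Spec}(\mathrm{Sym}(\mathcal{E}^\vee))$, admits a canonical zero section $s_0:X\to E$ corresponding to the augmentation $\mathrm{Sym}(\mathcal{E}^\vee)\to \mathcal{O}_X$ that kills positive-degree elements. This section satisfies $\pi\circ s_0=\mathrm{id}_X$. By the contravariant functoriality of $B$ established earlier in this section, we get $s_0^*\circ \pi^*=(\pi\circ s_0)^*=\mathrm{id}_{B(X)}$. This exhibits $\pi^*$ as split injective, and combined with surjectivity gives the desired isomorphism.

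The proof is essentially formal once one notices the zero section, so I would expect no real obstacle. The only thing worth a sentence of explanation is that the existence of $s_0$ does not require the hypothesis that $\pi^*$ be an isomorphism on $K$-theory, which is used only to feed Lemma \ref{keylem} for the surjectivity half; the injectivity half holds for \emph{any} vector bundle by the zero-section argument alone.
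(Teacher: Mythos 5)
Your proposal is correct and matches the paper's own proof exactly: injectivity via the zero section $\sigma$ with $\sigma^*\circ\pi^*=\mathrm{id}_{B(X)}$ by functoriality, and surjectivity from Lemma \ref{keylem}. No issues.
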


\begin{proof}
Let $\sigma:X\rightarrow E$ be the zero section of $E$ so that the composite $\pi\circ \sigma$ is the identity of $X$. By functorality $\sigma^*\circ \pi^*$ is the identity on $B(X)$ and the map $\pi^*$ is therefore injective. Surjectivity of $\pi^*$ follows from Lemma \ref{keylem}.
\end{proof}

\begin{lem}
Suppose that $X$ is regular, noetherian, and has an ample invertible sheaf. Then for any open subscheme $U\subset X$ with inclusion $i:U\rightarrow X$, the restriction $i^*:B(X)\rightarrow B(U)$ is surjective.
\end{lem}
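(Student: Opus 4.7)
The plan is to reduce this to a statement about $K$-theory by invoking Lemma \ref{keylem}, and then appeal to two standard results: (a) that coherent sheaves on open subsets of a noetherian scheme extend to coherent sheaves on the whole scheme, and (b) the resolution property, namely that on a regular noetherian scheme $X$ with an ample invertible sheaf, every coherent sheaf admits a finite resolution by finite rank locally free sheaves. The combination of (a) and (b) is exactly what is needed to show that $i^*: K(X) \to K(U)$ is surjective, after which Lemma \ref{keylem} gives the conclusion.

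In more detail, by Lemma \ref{keylem} it is enough to produce, for any finite rank locally free sheaf $\mathcal{F}$ on $U$, a class in $K(X)$ whose image in $K(U)$ is $[\mathcal{F}]$. First I would note that $\mathcal{F}$ is in particular a coherent sheaf on the open subscheme $U$ of the noetherian scheme $X$, so by the standard extension result for coherent sheaves (e.g.\ Hartshorne II, Exercise 5.15) it extends to a coherent sheaf $\widetilde{\mathcal{F}}$ on $X$ with $\widetilde{\mathcal{F}}|_U \cong \mathcal{F}$. Next, using the hypothesis that $X$ is regular, noetherian, and has an ample invertible sheaf, I would invoke the resolution property to obtain a finite resolution
\[
0 \to \mathcal{E}_n \to \mathcal{E}_{n-1} \to \cdots \to \mathcal{E}_0 \to \widetilde{\mathcal{F}} \to 0
\]
of $\widetilde{\mathcal{F}}$ by finite rank locally free sheaves $\mathcal{E}_j$ on $X$. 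Restricting this resolution to $U$ remains exact and gives a finite resolution of $\mathcal{F}$ by the locally free sheaves $\mathcal{E}_j|_U$, so that
\[
[\mathcal{F}] = \sum_{j=0}^{n}(-1)^j [\mathcal{E}_j|_U] = i^*\Bigl(\sum_{j=0}^{n}(-1)^j [\mathcal{E}_j]\Bigr)
\]
in $K(U)$. Since $K(U)$ is generated as a group by classes of finite rank locally free sheaves, this shows $i^*: K(X)\to K(U)$ is surjective, and Lemma \ref{keylem} then yields surjectivity of $i^*: B(X)\to B(U)$.

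The main point where one must be careful is the invocation of the resolution property; this is exactly the step where all three hypotheses (regularity, noetherianness, and the existence of an ample invertible sheaf) are needed simultaneously, and it is the reason these hypotheses appear in the statement. Everything else is essentially formal once we have Lemma \ref{keylem} in hand.
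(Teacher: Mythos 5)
Your proof is correct and follows the same route as the paper: reduce to surjectivity of $i^*:K(X)\rightarrow K(U)$ and then apply Lemma \ref{keylem}. The only difference is that the paper simply cites the $K$-theoretic surjectivity from the literature, whereas you unpack the standard argument (extension of coherent sheaves plus the resolution property), which you do correctly.
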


\begin{proof}
Under the given conditions, the pullback $i^*:K(X)\rightarrow K(U)$ is a surjection \cite{MR0265355}. The claim is then immediate from Lemma \ref{keylem}.
\end{proof}

\begin{lem}
Let $x$ be a point of $X$. Then there are isomorphisms
$$\varinjlim_{x\in U} B(U)=B(\varprojlim_{x\in U} U)=B(\mathrm{Spec}(\mathcal{O}_{X,x}))=\mathbb{Z}$$ where the limits are taken along open subschemes $U$ containing $x$ with respect to inclusions.
\end{lem}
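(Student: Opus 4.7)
I would prove the three equalities right to left. The middle identification $\varprojlim_{x\in U} U = \mathrm{Spec}(\mathcal{O}_{X,x})$ is standard in the category of schemes (take the affine opens containing $x$ as a cofinal subsystem, with open immersion transition maps), and functoriality of $B$ produces the natural map $\varinjlim_{x\in U} B(U) \to B(\mathrm{Spec}(\mathcal{O}_{X,x}))$ whose bijectivity is the content of the first equality.

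To prove $B(\mathrm{Spec}(\mathcal{O}_{X,x})) = \mathbb{Z}$, I would use that every finitely generated projective module over a local ring is free; hence every finite rank locally free sheaf on $\mathrm{Spec}(\mathcal{O}_{X,x})$ is isomorphic to some $\mathcal{O}^n$. Applying the additivity of $c_1^B$ on tensor products to $\mathcal{O}\otimes \mathcal{O} \cong \mathcal{O}$ gives $c_1^B(\mathcal{O}) = 2c_1^B(\mathcal{O})$, so $c_1^B(\mathcal{O}) = 0$; an induction via the Whitney sum relation on the split extensions $0\to \mathcal{O}^{n-1} \to \mathcal{O}^n \to \mathcal{O} \to 0$ then yields $c_i^B(\mathcal{O}^n) = 0$ for $i > 0$. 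All positive-degree generators of $B(\mathrm{Spec}(\mathcal{O}_{X,x}))$ therefore vanish, leaving only the degree-zero subring. That this subring is exactly $\mathbb{Z}$ follows from inspection of the construction: the degree-zero part of $\mathbb{Z}[R_X]/I_X$ is the quotient of the polynomial ring $\mathbb{Z}[\{c_0^B(\mathcal{F})\}]$ by the relations $c_0^B(\mathcal{F})-1$, which is $\mathbb{Z}$, and each of the unital structure maps appearing in the construction of $B$ sends this copy of $\mathbb{Z}$ isomorphically onto the corresponding copy of $\mathbb{Z}$ downstream.

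For the leftmost equality, the same trivialization argument works locally. An arbitrary element of $\varinjlim_{x\in U} B(U)$ is represented in some $B(U)$ by a polynomial in classes $[c_i^B(\mathcal{F}_j)]$ for finitely many sheaves $\mathcal{F}_j$ on $U$. Each $\mathcal{F}_j$ is free on some neighborhood $V_j \ni x$, so on $V = \bigcap_j V_j$ every positive-degree Chern class $[c_i^B(\mathcal{F}_j)]$ restricts to zero and the element lies in $\mathbb{Z} \subset B(V)$. This shows the colimit map $\varinjlim B(U) \to B(\mathrm{Spec}(\mathcal{O}_{X,x}))$ is surjective, and its injectivity follows immediately because both sides are thereby identified with their common degree-zero subring $\mathbb{Z}$. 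The only point that requires any care is the verification that $\mathbb{Z}$ injects into $B(X)$ for every scheme $X$, but this is forced by the homogeneity of the ideal $I_X$ together with the unit-preservation of the structure maps in the directed system defining $B(X)$.
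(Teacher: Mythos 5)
Your proof is correct and follows essentially the same route as the paper's: both arguments rest on the facts that finitely generated projective modules over a local ring are free and that vector bundles trivialize on a neighborhood of $x$, so every positive-degree Chern class dies and only the degree-zero copy of $\mathbb{Z}$ survives. The only cosmetic difference is that the paper gets surjectivity of $\varinjlim_{x\in U} B(U)\rightarrow B(\mathrm{Spec}(\mathcal{O}_{X,x}))$ by citing Lemma \ref{keylem} (via surjectivity of $K(U)\rightarrow K(\mathrm{Spec}(\mathcal{O}_{X,x}))=\mathbb{Z}$), whereas you verify the vanishing $c_i^B(\mathcal{O}^n)=0$ and the identification of the degree-zero subring with $\mathbb{Z}$ by hand.
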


\begin{proof} 
	Since projective modules over a local ring are free, we have $K(\mathrm{Spec}(\mathcal{O}_{X,x}))=\mathbb{Z}$ with generator the class of $\mathcal{O}_{X,x}$. Thus, the canonical map $K(U)\rightarrow K(\mathrm{Spec}(\mathcal{O}_{X,x}))$ is surjective for every open subscheme $U\subset X$ containing $x$ and the surjectivity of the canonical map $$\varinjlim_{x\in U} B(U)\rightarrow B(\varprojlim_{x\in U} U)$$ follows from Lemma \ref{keylem}.

To show injectivity of this map, it suffices to show every Chern class of positive degree is trivial over some open set around $x$. But, this is true for every vector bundle on $X$ so that it is also true for every Chern class.
\end{proof}
\section{Chern classes and $\lambda$-rings}\label{cclr}
The functor $B$ defined in the previous section is compatible, in some way, with the $\lambda$-ring structure of the Grothendieck ring. For most, if not all, schemes $X$ where we can describe the ring $B(X)$, we depend heavily on this compatibility. We therefore take the time, in this section, to develop some of these relations, between the functor $B$ and the $\lambda$-ring structure of the Grothendieck ring, more precisely.

Recall (\cite[Expose 0]{MR0354655}, \cite{MR0265355}, or \cite{MR801033})  that for a scheme $X$ the Grothendieck ring $K(X)$ is equipped with a canonical structure of a $\lambda$-ring. This means that for any integer $i\geq 0$ there are natural transformations (of functors to $\mathsf{Set}$) $\lambda^i:K\rightarrow K$ defined so that $\lambda^i([\mathcal{F}])=[\Lambda^i(\mathcal{F})]$ for any sheaf $\mathcal{F}$. These natural transformations have the properties:
\begin{enumerate}[\normalfont (1)]
	\item $\lambda^0(x)=1$ for all $x$ in $K(X)$
	\item $\lambda^1(x)=x$ for all $x$ in $K(X)$
	\item $\lambda^i(x+y)=\sum_{j=0}^i \lambda^{i-j}(x)\lambda^{j}(y)$
	\item $\lambda^i(xy)=P_i(\lambda^1(x),...,\lambda^i(x),\lambda^1(y),...,\lambda^i(y))$ for certain universal polynomials $P_i$
	\item $\lambda^i(\lambda^j(x))=P_{i,j}(\lambda^1(x),...,\lambda^{ij}(x))$ for certain universal polynomials $P_{i,j}$.
\end{enumerate}

\begin{rmk}
For any $\lambda$-ring $R$, there are well-defined Schur operations $S^\mu:R\rightarrow R$ for any partition $\mu=(\mu_1,...,\mu_n)$ defined by $$S^\mu(x)=\mathrm{det}(\lambda^{\mu_i+j-i}(x))_{1\leq i,j \leq n}.$$ If $\epsilon\subset \mu$ is another partition, one can define an operation $S^{\mu/\epsilon}:R\rightarrow R$ for the skew diagram $\mu/\epsilon$ as the sum
$$S^{\mu/\epsilon}(x)=\sum_{\nu} c_{\epsilon, \nu}^\mu S^\nu(x)$$ where $c_{\epsilon,\nu}^\mu$ is a Littlewood-Richardson coefficient. These operations satisfy the formula $$S^{\mu/\epsilon}(x+y)=\sum_{\epsilon\subset \nu\subset \mu}S^{\nu/\epsilon}(x) S^{\mu/\nu}(y)$$ generalizing that for the $\lambda$-operations.
\end{rmk}

\begin{exmp}\label{gengrass}
Let $\mathrm{Gr}(m,n)$ be the Grassmannian variety of $m$-dimensional planes in an $n$-dimensional $k$-vector space. Then $K(\mathrm{Gr}(m,n))$ is additively generated by the classes $S^\mu(Q)$ where $Q$ is the universal quotient sheaf on $\mathrm{Gr}(m,n)$ of rank $n-m$ and $\mu$ ranges over partitions which fit inside a box of size $(n-m)\times m$.
\end{exmp}

Together Lemma \ref{sym} and Lemma \ref{gen} below show that a collection of $\lambda$-ring generators for the Grothendieck ring $K(X)$ of a scheme $X$ determine a collection of generators for $B(X)$.

\begin{lem}\label{sym}
	For any locally free sheaves $\mathcal{F},\mathcal{G}$ of ranks $n,m$ respectively and for any $i\geq 1$, there exist polynomials $Q_{n,m,i}$ so that $$c_t^B(\mathcal{F}\otimes \mathcal{G})=1+\sum_{i\geq 1} Q_{n,m,i}(c_1^B(\mathcal{F}),...,c_i^B(\mathcal{F}),c_1^B(\mathcal{G}),...,c_i^B(\mathcal{G}))t^i$$ inside of $1+tB(X)[[t]]$.
\end{lem}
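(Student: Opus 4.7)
The proof is a splitting principle argument. The plan is to pull back to a chain of projective bundles on which both $\mathcal{F}$ and $\mathcal{G}$ acquire filtrations by invertible sheaves, do the calculation there, and then descend by injectivity.

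The first step is to observe that for any chain of projective bundles $\pi : P \to X$, the pullback $\pi^* : B(X) \to B(P)$ is injective. This follows directly from the construction of $B$: if $\tilde{x} \in \mathbb{Z}[R_X]$ represents a class whose pullback vanishes in $B(P)$, then $\tilde{x}$ becomes zero somewhere in the directed system defining $B(P)$, and concatenating the chain witnessing this with the original chain $X \leftarrow P$ produces a chain in $\mathbb{P}^X$ showing $\tilde{x} \in \ker(f_X)$. Iterating projective bundles, first on $\mathcal{F}$ and its successive quotient bundles, then on $\mathcal{G}$, produces such a $\pi : P \to X$ over which $\pi^*\mathcal{F}$ has a filtration $0 = F_0 \subset \cdots \subset F_n = \pi^*\mathcal{F}$ with invertible quotients $L_i = F_i/F_{i-1}$, and $\pi^*\mathcal{G}$ has an analogous filtration with invertible quotients $M_1,\ldots,M_m$.

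Writing $a_i := c_1^B(L_i)$ and $b_j := c_1^B(M_j)$, iterated application of the Whitney sum relation (property (3) of Proposition \ref{up}) gives $c_t^B(\pi^*\mathcal{F}) = \prod_i (1 + a_i t)$ and similarly for $\pi^*\mathcal{G}$. The tensor product filtration on $\pi^*(\mathcal{F}\otimes \mathcal{G})$, obtained by refining the filtration $F_i\otimes \pi^*\mathcal{G}$ using the filtration of $\pi^*\mathcal{G}$, has invertible successive quotients $L_i \otimes M_j$; combining Whitney sum with the additivity of $c_1^B$ on invertible sheaves (property (4)) yields
\[ c_t^B\bigl(\pi^*(\mathcal{F}\otimes \mathcal{G})\bigr) = \prod_{i,j}\bigl(1 + (a_i + b_j)t\bigr). \]
The right-hand side is separately symmetric in the $a_i$ and in the $b_j$, so by the fundamental theorem of symmetric polynomials each coefficient is a universal polynomial $Q_{n,m,i}$ in the elementary symmetric functions $e_k(a)$ and $e_\ell(b)$. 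By Whitney sum applied to the filtrations of $\pi^*\mathcal{F}$ and $\pi^*\mathcal{G}$ we have $e_k(a) = c_k^B(\pi^*\mathcal{F})$ and $e_\ell(b) = c_\ell^B(\pi^*\mathcal{G})$, so the resulting identity in $B(P)$ is precisely the pullback of the claimed identity. Injectivity of $\pi^*$ completes the argument.

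The main obstacle is the first step: since $B$ is built as a quotient by the kernel of a map into a directed limit rather than by directly enforcing axiom (5) of Proposition \ref{up}, one must unwind the construction to verify projective bundle injectivity. Once this is in hand, everything else is the standard symmetric function argument from classical intersection theory, and the polynomial $Q_{n,m,i}$ depends only on $n$, $m$, and $i$ because it is extracted from a purely formal identity in symmetric functions.
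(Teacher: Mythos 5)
Your proof is correct and follows essentially the same route as the paper: split $\mathcal{F}$ and $\mathcal{G}$ into invertible sheaves over a chain of projective bundles, compute $c_t^B(\mathcal{F}\otimes\mathcal{G})=\prod_{i,j}(1+(a_i+b_j)t)$, and apply the fundamental theorem of symmetric polynomials. The only difference is that you spell out the injectivity of $\pi^*:B(X)\rightarrow B(P)$ from the directed-limit construction, which the paper leaves implicit in the phrase ``it suffices to work over a chain of projective bundles.''
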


\begin{proof}
	It suffices to work over a chain of projective bundles $P$ where the classes of $\mathcal{F},\mathcal{G}$ split into a sum of invertible sheaves in $K(P)$. Assuming this is the case, we can write $[\mathcal{F}]=[\mathcal{L}_1]+\cdots + [\mathcal{L}_n]$, for some invertible sheaves $\mathcal{L}_1,...,\mathcal{L}_n$, and $[\mathcal{G}]=[\mathcal{L}'_1]+\cdots + [\mathcal{L}'_m]$, for some invertible sheaves $\mathcal{L}_1',..., \mathcal{L}_m'$, so that $$c_t^B(\mathcal{F}\otimes \mathcal{G})=\prod_{1\leq i\leq n,1\leq j\leq m}(1+c^B_1(\mathcal{L}_i)+c^B_1(\mathcal{L}'_j)).$$ Since the product on the right-hand-side of this equality is symmetric in the Chern classes $c^B_1(\mathcal{L})$'s and symmetric in the Chern classes $c^B_1(\mathcal{L}')$'s, the claim follows by choosing $Q_{n,m,i}$ to be the homogeneous polynomial expressing the weight $i$ part of this product as a polynomial of in elementary symmetric polynomials in these variables.
\end{proof}

\begin{exmp}[c.f.\ {\cite[Example 3.2.2]{MR1644323}}]\label{line}
	If $\mathcal{F}$ is a locally free sheaf of rank $n$ and if $\mathcal{L}$ is an invertible sheaf then, for any $j\geq 1$, $$c^B_j(\mathcal{F}\otimes \mathcal{L})=\sum_{i=0}^j\binom{n-i}{j-i}c^B_i(\mathcal{F})c^B_1(\mathcal{L})^{j-i}.$$ Equivalently, $$c_t^B(\mathcal{F}\otimes \mathcal{L})=c_t^B(\mathcal{L})^nc^B_\tau(\mathcal{F})$$ where $\tau=t/c_t^B(\mathcal{L})$.
\end{exmp}

\begin{lem}\label{gen}
	For any finite rank locally free sheaf $\mathcal{F}$, the Chern class $c^B_j(\lambda^i(\mathcal{F}))$ can be expressed as a polynomial in the Chern classes of $\mathcal{F}$.
\end{lem}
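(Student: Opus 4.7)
The strategy is the same splitting-principle argument used in the proof of Lemma \ref{sym}. The plan is to reduce to an identity in line bundles by passing to a chain of projective bundles $P \in \mathbb{P}^X$ over which $[\mathcal{F}]$ splits as $[\mathcal{L}_1] + \cdots + [\mathcal{L}_n]$ in $K(P)$; such a chain exists by successive projectivization. The construction of $B(X)$ as the quotient of $\mathbb{Z}[R_X]$ by $\ker(f_X)$ makes the pullback $B(X) \to B(P)$ injective, since any element killed after a further tower of projective bundles over $P$ assembles with the chain $P\to X$ into a chain in $\mathbb{P}^X$ already trivializing it. Hence it is enough to establish the polynomial identity in $B(P)$.

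In the $\lambda$-ring $K(P)$ the exterior power decomposes additively as
$$[\lambda^i(\mathcal{F})] = \sum_{1 \le k_1 < \cdots < k_i \le n} [\mathcal{L}_{k_1} \otimes \cdots \otimes \mathcal{L}_{k_i}].$$
Applying the Whitney sum formula for the total Chern class $c_t^B$ and then the additivity of $c_1^B$ on tensor products of invertible sheaves then yields
$$c_t^B(\lambda^i(\mathcal{F})) = \prod_{1 \le k_1 < \cdots < k_i \le n}\bigl(1 + (c_1^B(\mathcal{L}_{k_1}) + \cdots + c_1^B(\mathcal{L}_{k_i}))t\bigr).$$
The coefficient of each $t^j$ on the right is symmetric in the formal variables $c_1^B(\mathcal{L}_1),\ldots,c_1^B(\mathcal{L}_n)$, hence by the fundamental theorem on symmetric polynomials a universal integer polynomial in their elementary symmetric functions. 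Another appeal to Whitney identifies the $k$th such elementary symmetric function with $c_k^B(\mathcal{F})$, producing the desired expression for $c_j^B(\lambda^i(\mathcal{F}))$ as a polynomial in $c_1^B(\mathcal{F}),\ldots,c_n^B(\mathcal{F})$ over $P$. By injectivity of $B(X)\hookrightarrow B(P)$, the identity descends to $X$.

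The argument is essentially parallel to Lemma \ref{sym} and there is no substantive obstacle: everything is a symbolic computation with symmetric polynomials in the $c_1^B(\mathcal{L}_k)$'s, and the two nontrivial facts -- existence of a splitting chain and descent from $B(P)$ to $B(X)$ -- are both already built into the formalism.
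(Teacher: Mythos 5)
Your proposal is correct and follows essentially the same route as the paper: split $[\mathcal{F}]$ over a chain of projective bundles, write $[\lambda^i(\mathcal{F})]$ as the sum of classes of $i$-fold tensor products of the line bundles, compute the total Chern class as a product symmetric in the $c_1^B(\mathcal{L}_k)$'s, and invoke elementary symmetric polynomials. The paper's proof is just a terser version of the same argument, with the injectivity of $B(X)\to B(P)$ left implicit in the construction.
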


\begin{proof}
	Let $x=[\mathcal{F}]$. We work over a chain of projective bundles $P$ where the class $x$ splits $x=x_1+\cdots + x_n$ into a sum of classes of some invertible sheaves $x_1,...,x_n$. Then $$c^B_t(\Lambda^j\mathcal{F})=\prod_{i_1<\cdots < i_j} (1+c^B_1(x_{i_1})+\cdots+c^B_1(x_{i_j}))$$ is symmetric in the $c^B_1(x_k)'s$ which proves the claim.
\end{proof}

\begin{rmk}[c.f.\ {\cite[Remark 3.2.3 (a)]{MR1644323}}]\label{dual}
	The same process allows one to determine the Chern classes of the dual $\mathcal{F}^\vee$ of a finite rank locally free sheaf $\mathcal{F}$. Explicitly, $$c_j^B(\mathcal{F}^\vee)=(-1)^jc^B_j(\mathcal{F})$$ for any integer $j\geq 0$.
\end{rmk}

These observations were the motivation for the definition:

\begin{defn}
	We define the \textit{level} of a variety $X$
	\[ \mathrm{lev}(X)=\min\{ \#S : S\subset K(X) \text{ generates $K(X)$ as a $\lambda$-ring}\} \]
	as the minimal cardinality of a finite subset $S\subset K(X)$ such that the elements of $S$ generate $K(X)$ as a $\lambda$-ring. If there are no finite subsets $S\subset K(X)$ that generate $K(X)$, then the level is said to be infinite, i.e.\ $\mathrm{lev}(X)=\infty$.
\end{defn}

\begin{exmp}
	If $X=\mathbb{P}^n$ or, more generally if $X=\mathrm{Gr}(m,n)$, then $\mathrm{lev}(X)=1$. 
	
	More generally still, for any sequence $0=n_0< n_1 <\cdots <n_k=n$, let $X(n_0,...,n_k)$ be the variety of $(n_0,...,n_k)$-flags in a vector space of dimension $n$. Then $\mathrm{lev}(X(n_0,...,n_k))=k-1$. To see this, note that there are $k$ tautological locally free sheaves which generate the ring $K(X)$, of finite ranks $n_1,...,n_k$ respectively. Note also that there is exactly one linear relation between these sheaves. Thus $\mathrm{lev}(X)\leq k-1$. Conversely, $\mathrm{lev}(X)\geq \mathrm{rk}_\mathbb{Z}\mathrm{Pic}(X)$ and the latter of these equals $k-1$ as well. 
	
	If $X$ is the Severi--Brauer variety associated to a central simple algebra $A$ with index $\mathrm{ind}(A)=p^n$, for some prime $p$ and some $n\geq 1$, then the level of $X$ is bounded above by the length of a subsequence of the sequence of indices of various tensor powers of $A$, see \cite[Lemma A.6]{KE}. It's likely, but difficult to show, that this upper bound is sharp.
\end{exmp}

At this point we've developed sufficiently enough theory to give a concrete example:

\begin{exmp}\label{grass} By Example \ref{gengrass}, the ring $K(\mathrm{Gr}(m,n))$ is generated by polynomials in the $\lambda$-operations of the universal quotient sheaf $Q$. By Lemmas \ref{sym} and \ref{gen}, this means that the ring $B(\mathrm{Gr}(m,n))$ is generated by the Chern classes of $Q$, call them $c_1,...,c_{n-m}$.
	
	We get relations in $B(\mathrm{Gr}(m,n))$ from the universal exact sequence of the universal sub sheaf $S$ and the universal quotient sheaf $Q$, $$0\rightarrow S\rightarrow \mathcal{O}_{\mathrm{Gr}(m,n)}^{\oplus n}\rightarrow Q\rightarrow 0.$$ If $m\leq n-m$, then from this exact sequence we find $c_t(S)=1/c_t(Q)$. Let $f_{m+1},...,f_{n}$ be the polynomials in the Chern classes of $Q$ which are the coefficients of $t^{m+1},...,t^n$ in the expansion of $1/c_t(Q)$ as a power series in $t$. If $m> n-m$, then let us rename $c_1,...,c_m$ to be the Chern classes of $S$, which evidently also generate $B(\mathrm{Gr}(m,n))$ due to the exact sequence above, and name $f_{n-m+1},...,f_{n}$ to be the coefficients of $t^{n-m+1},...,t^n$ in the expansion of $1/c_t(S)$ as a power series in the variable $t$.
	
	We claim that \[B(\mathrm{Gr}(m,n))=\begin{cases} \mathbb{Z}[c_1,...,c_{n-m}]/(f_{m+1},...,f_n) & \mbox{ if } m\leq n-m \\ \mathbb{Z}[c_1,...,c_m]/(f_{n-m+1},...,f_n) & \mbox{ if } m>n-m.\end{cases}\] Indeed, to complete the proof it's sufficient to find a cohomology theory $A$ with a theory of Chern classes in the sense of Proposition \ref{up} such that $A(\mathrm{Gr}(n,m))$ is the desired ring. Taking $A=\mathrm{CH}$ suffices (see \cite[Theorem 5.26]{MR3617981}).
\end{exmp}

The natural transformations $\lambda^i$ can also be used to define $\gamma$-operations that will play an important role later in this paper. The $\gamma$-operations are natural transformations $\gamma^i:K\rightarrow K$ determined by the pointwise formula $$\gamma^i(x)=\lambda^i(x+i-1)$$ for any element $x\in K(X)$ and for any predetermined scheme $X$. 

In this paper, we'll be particularly interested in the $\gamma$-filtration on $K(X)$ for an arbitrary scheme $X$. Recall that the $\gamma$-filtration is the ascending multiplicative filtration $F_\gamma^\bullet(X)$ of $K(X)$ defined termwise as follows: \begin{enumerate}[\textperiodcentered]
\item $F_\gamma^0(X)=K(X)$ 
\item $F_\gamma^1(X)=\mathrm{ker}(\mathrm{rk})$ where $\mathrm{rk}:K(X)\rightarrow \mathbb{Z}$ is the rank homomorphism 
\item $F_\gamma^i(X)$ is defined to be the ideal generated by monomials $\gamma^{i_1}(x_1)\cdots \gamma^{i_j}(x_j)$ where $x_1,...,x_j$ are elements of $F_\gamma^1(X)$ and $i_1+\cdots + i_j\geq i$.
\end{enumerate}

We write $\mathrm{gr}_\gamma K(X)$ for the graded ring associated to the $\gamma$-filtration of $K(X)$. By definition, the associated graded ring is the sum of quotients \[\mathrm{gr}_\gamma K(X)=\bigoplus_{i\geq 0} \mathrm{gr}_\gamma^iK(X) \quad \mbox{where} \quad \mathrm{gr}_\gamma^i K(X) =F^i_\gamma(X)/F^{i+1}_\gamma (X).\] There are a collection of Chern class functors $c_i^\gamma: K\rightarrow \mathrm{gr}_\gamma^i K$ determined for a scheme $X$ by the rule $c_i^\gamma(\mathcal{F})=\gamma^i(\mathrm{rk}(\mathcal{F})-[\mathcal{F}^\vee])$ for any locally free sheaf $\mathcal{F}$ on $X$. By the results of \cite[Chapter 5, \S2]{MR801033}, these functors satisfy the conditions (1)-(5) of Proposition \ref{up} with the caveat that they are restricted to the category $\mathsf{C}$ of all noetherian schemes. Therefore, because of Remark \ref{catchall}, there is a natural transformation $b_\gamma: B\rightarrow \mathrm{gr}_\gamma K$ of functors from the category of noetherian schemes that takes Chern classes to Chern classes. In particular, all of the formulae proved in this section that involve Chern classes hold additionally for the ring $\mathrm{gr}_\gamma K(X)$ for any noetherian scheme $X$.

\begin{rmk}\label{surj}
For any noetherian scheme $X$, the associated morphism $$B(X)\rightarrow \mathrm{gr}_\gamma K(X)$$ is a surjection since $\mathrm{gr}_\gamma K(X)$ is evidently generated by Chern classes.
\end{rmk}

We conclude this section by showing that the total Chern class of any element $x\in K(X)$ is a polynomial under some reasonable assumptions on $X$. The proof that we give here relies on a number of the examples developed in this section. We don't know of any scheme $X$ for which the following proposition doesn't hold.

\begin{prop}\label{nilp}
	Suppose that $X$ is a variety with an ample invertible sheaf $\mathcal{L}$. Then the Chern class $c^B_i(\mathcal{F})$ is nilpotent for any finite rank locally free sheaf $\mathcal{F}$ on $X$ and for any integer $i\geq 1$. In particular, the total Chern class $c^B_t(x)$ is a polynomial in $t$ for any element $x$ in $K(X)$.
\end{prop}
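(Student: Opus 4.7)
The plan is to reduce nilpotence of the higher Chern classes on $X$ to nilpotence on Grassmannians, where finiteness of the ring gives nilpotence essentially for free, and then to transfer back using ampleness of $\mathcal{L}$ together with the twist formula of Example \ref{line}. The first step is to observe that every positive-degree element of $B(\mathrm{Gr}(m,n))$ is nilpotent: by Example \ref{grass}, $B(\mathrm{Gr}(m,n))$ coincides with the Chow ring of the Grassmannian, which is a finitely generated $\mathbb{Z}$-module concentrated in degrees $0,\ldots,m(n-m)$, so a sufficiently high power of any element of positive degree lies in a degree where the group vanishes.

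Next, let $\mathcal{F}$ be a locally free sheaf on $X$ of rank $r$. Since $X$ is a variety (hence quasi-compact) and $\mathcal{L}$ is ample, there is an $n_0$ such that $\mathcal{F}\otimes \mathcal{L}^{\otimes n}$ is globally generated for all $n\geq n_0$. A choice of surjection $\mathcal{O}_X^{\oplus N}\twoheadrightarrow \mathcal{F}\otimes \mathcal{L}^{\otimes n}$ yields a classifying morphism $f_n:X\to \mathrm{Gr}(N-r,N)$ with $\mathcal{F}\otimes \mathcal{L}^{\otimes n}\cong f_n^{*}Q$, where $Q$ is the universal quotient. Functoriality and the Grassmannian step then make each $c_i^{B}(\mathcal{F}\otimes \mathcal{L}^{\otimes n})$ nilpotent in $B(X)$ for $i\geq 1$. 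Specializing to $\mathcal{F}=\mathcal{O}_X$, this gives that $n\,c_1^{B}(\mathcal{L})=c_1^{B}(\mathcal{L}^{\otimes n})$ is nilpotent for every $n\geq n_0$; since the nilradical of $B(X)$ is an ideal, the difference $(n_0+1)c_1^{B}(\mathcal{L})-n_0\,c_1^{B}(\mathcal{L})=c_1^{B}(\mathcal{L})$ is also nilpotent. I would then deduce nilpotence of $c_j^{B}(\mathcal{F})$ for an arbitrary $\mathcal{F}$ by applying Example \ref{line} to the factorization $\mathcal{F}=(\mathcal{F}\otimes \mathcal{L}^{\otimes n})\otimes (\mathcal{L}^\vee)^{\otimes n}$, which expresses $c_j^{B}(\mathcal{F})$ as a $\mathbb{Z}$-linear combination of products involving the nilpotent classes $c_i^{B}(\mathcal{F}\otimes \mathcal{L}^{\otimes n})$ and $c_1^{B}(\mathcal{L})$.

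For the \emph{in particular} statement, I would write $x=[\mathcal{F}]-[\mathcal{G}]$ for some locally free sheaves $\mathcal{F},\mathcal{G}$. By Proposition \ref{up}(2), both $c_t^{B}(\mathcal{F})$ and $c_t^{B}(\mathcal{G})$ are polynomials in $t$, and because $c_t^{B}$ is a homomorphism one has $c_t^{B}(x)=c_t^{B}(\mathcal{F})\,c_t^{B}(\mathcal{G})^{-1}$. Writing $c_t^{B}(\mathcal{G})=1+g(t)$ with $g(t)\in B(X)[t]$ having nilpotent coefficients, and using the standard identification $\mathrm{nil}(B(X)[t])=\mathrm{nil}(B(X))[t]$, the element $g(t)$ is itself nilpotent, so $c_t^{B}(\mathcal{G})^{-1}=\sum_{k\geq 0}(-g(t))^k$ is a finite sum and hence polynomial. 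The main obstacle I anticipate is the passage from ``$n\,c_1^{B}(\mathcal{L})$ nilpotent for large $n$'' to ``$c_1^{B}(\mathcal{L})$ nilpotent'': without the consecutive-integer subtraction trick one would only get integer-torsion-type nilpotence and the argument would fail to close.
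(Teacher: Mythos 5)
Your proof is correct and follows essentially the same route as the paper: reduce to the globally generated case via a classifying map to a Grassmannian (where Example \ref{grass} gives nilpotence), then use ampleness and the twist formula of Example \ref{line} to handle a general $\mathcal{F}$. Your treatment of the final step---inverting $c_t^B(\mathcal{G})$ as a geometric series in a polynomial with nilpotent coefficients---is a more explicit version of the paper's argument, and the consecutive-integer trick for $c_1^B(\mathcal{L})$ is not strictly needed since $c_1^B(\mathcal{L}^{\otimes n})$ is already nilpotent by global generation, but neither point changes the substance.
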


\begin{proof}
	If $\mathcal{F}$ is globally generated, then there is a morphism $f:X\rightarrow \mathrm{Gr}(m,n)$ for some $m,n$ such that $f^*Q=\mathcal{F}$. Since the Chern classes of $Q$ are nilpotent due to Example \ref{grass}, the same follows for the Chern classes of this $\mathcal{F}$.
	
	In the general case, since $X$ has an ample invertible sheaf $\mathcal{L}$, there is some integer $n>0$ so that both $\mathcal{F}\otimes \mathcal{L}^{\otimes n}$ and $\mathcal{L}^{\otimes n}$ are globally generated. By Example \ref{line} and induction, any Chern class of $\mathcal{F}$ can be written as a polynomial in the Chern classes of $\mathcal{F}\otimes \mathcal{L}^{\otimes n}$ and $\mathcal{L}^{\otimes n}$. Since both of these sheaves are globally generated, their Chern classes are nilpotent and thus so are the Chern classes of $\mathcal{F}$.
	
	For the final statement, we write $x=[\mathcal{F}]-[\mathcal{G}]$ and observe that for sufficiently large Chern classes of $x$ there are sufficiently large powers of the Chern classes of $\mathcal{F}$ or $\mathcal{G}$ involved. Eventually then these terms must vanish. 
\end{proof}

\section{A GRR without denominators type theorem}
Throughout this section $X$ is an arbitrary but fixed noetherian scheme. Our primary goal is to prove the following Grothendieck-Riemann-Roch without denominators type theorem between the graded ring $B(X)$, constructed in Section \ref{cfp}, and the graded ring $\mathrm{gr}_\gamma K(X)$ associated to the filtration $F_{\gamma}^\bullet(X)\subset K(X)$, that was considered near the end of Section \ref{cclr}.

\begin{thm}\label{grr}
Let $X$ be a connected noetherian scheme and write $$b^i_\gamma: B^i(X)\rightarrow \mathrm{gr}^i_\gamma K(X)$$ for the $i$th summand of the canonical morphism of Proposition \ref{up} applied to $(\mathrm{gr}_\gamma K, c_i^\gamma)$. Then the Chern classes $c_i^B$ induce well-defined maps $$c_i^B:\mathrm{gr}^i_\gamma K(X)\rightarrow B^i(X)$$ such that the compositions \[c_i^B\circ b^i_\gamma=(-1)^{i-1}(i-1)!\quad \mbox{and}\quad b^i_\gamma\circ c_i^B=(-1)^{i-1}(i-1)!\] are both multiplication by $(-1)^{i-1}(i-1)!$ for all $i>0$.
\end{thm}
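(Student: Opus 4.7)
The plan is to exploit the splitting principle. Both of the functors in play have the projective bundle injectivity property: for $B$ this is by construction via the directed system over $\mathbb{P}^X$, and for $\mathrm{gr}_\gamma K$ this is part of the cited list from \cite{MR801033} (item (5) of Proposition \ref{up} for the pair $(\mathrm{gr}_\gamma K, c_i^\gamma)$). Consequently, any identity in $B^i(X)$ or $\mathrm{gr}^i_\gamma K(X)$ may be verified after pullback to a suitable chain of projective bundles $P \in \mathbb{P}^X$ on which every finite rank locally free sheaf splits in $K(P)$ as a $\mathbb{Z}$-combination of line bundle classes.

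The first step is to show $c_i^B : K \to B^i$ descends to a well-defined additive map $c_i^B : \mathrm{gr}^i_\gamma K(X) \to B^i(X)$. For this I would prove the key vanishing lemma that, for every $x \in F_\gamma^i(X)$ and every $1 \le j < i$, the Chern class $c_j^B(x) \in B^j(X)$ is zero. Granting this, the Whitney sum formula
\[
c_i^B(x + y) = c_i^B(x) + c_i^B(y) + \sum_{0 < j < i} c_j^B(x)\, c_{i-j}^B(y)
\]
forces $c_i^B|_{F_\gamma^i}$ to be additive, and the lemma applied at index $i+1$ shows $c_i^B$ vanishes on $F_\gamma^{i+1}$. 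The lemma itself is proved on a splitting $P$: a typical generator of $F_\gamma^i$ has the form $\gamma^{i_1}(z_1) \cdots \gamma^{i_m}(z_m)$ with $z_k \in F_\gamma^1$ and $\sum i_k \ge i$, and after expressing each $z_k$ as a rank-zero $\mathbb{Z}$-combination of line bundle classes, the homomorphism property of $c_t^B$ combined with the additivity of $c_1^B$ on line bundles makes the low-degree Chern classes visibly vanish.

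Second, I would compute the key explicit identity. For line bundles $\mathcal{L}_1, \ldots, \mathcal{L}_i$ on $P$, set $z_k := 1 - [\mathcal{L}_k^\vee] \in F_\gamma^1$ and $L_k := c_1^B(\mathcal{L}_k)$. The claim is
\[
c_i^B(z_1 z_2 \cdots z_i) = (-1)^{i-1}(i-1)!\, L_1 L_2 \cdots L_i.
\]
Expanding the product as $z_1 \cdots z_i = \sum_{S \subseteq \{1, \ldots, i\}} (-1)^{|S|} \bigl[\bigotimes_{k \in S} \mathcal{L}_k^\vee\bigr]$ and using the homomorphism property of $c_t^B$ together with Remark \ref{dual} yields
\[
\log c_t^B(z_1 \cdots z_i) = -\sum_{m \ge 1} \frac{t^m}{m} \sum_{S} (-1)^{|S|} \Bigl(\sum_{k \in S} L_k \Bigr)^m.
\]
Applying the multinomial theorem to $(\sum_{k \in S} L_k)^m$ and then the inclusion-exclusion identity $\sum_{S \supseteq T} (-1)^{|S|} = 0$ (which vanishes unless $T = \{1, \ldots, i\}$) leaves only the term $m = i$, $T = \{1, \ldots, i\}$, whose contribution to the coefficient of $t^i$ in $\log c_t^B$ is $(-1)^{i-1}(i-1)! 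L_1 \cdots L_i$; exponentiating produces the claim.

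Finally, I would assemble the two compositions. The map $b^i_\gamma$ sends $L_1 \cdots L_i$ to $c_1^\gamma(\mathcal{L}_1) \cdots c_1^\gamma(\mathcal{L}_i) = \ell_1 \cdots \ell_i$, and a lift of $\ell_1 \cdots \ell_i$ in $F_\gamma^i(P)$ is precisely $z_1 \cdots z_i$. So the explicit identity of the third paragraph yields both $(c_i^B \circ b^i_\gamma)(L_1 \cdots L_i) = (-1)^{i-1}(i-1)! L_1 \cdots L_i$ and $(b^i_\gamma \circ c_i^B)(\ell_1 \cdots \ell_i) = (-1)^{i-1}(i-1)! \ell_1 \cdots \ell_i$. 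On any sufficiently deep $P$ the groups $B^i(P)$ and $\mathrm{gr}^i_\gamma K(P)$ are both generated as abelian groups by such monomials in first Chern classes of line bundles (using Lemma \ref{sym} for $B$, and the analogous identities for $\mathrm{gr}_\gamma K$), so both compositions are multiplication by $(-1)^{i-1}(i-1)!$ on $P$ and hence, by the injectivity of pullback, on $X$. The main obstacle I anticipate is the vanishing lemma of the second paragraph: its statement is standard in $\lambda$-ring theory, but verifying it directly in the abstract ring $B$ requires careful bookkeeping with the $\gamma$-operations and the splitting principle rather than appealing to a prior result.
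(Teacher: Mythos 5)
Your overall architecture matches the paper's: reduce everything to a chain of projective bundles $P$ over which the relevant classes split, prove that $c^B_j$ kills products of more than $j$ line-bundle differences, and prove that $c^B_i$ of a product of exactly $i$ such differences is $(-1)^{i-1}(i-1)!$ times the product of the first Chern classes. Where you genuinely diverge is in how you prove these two computational facts. The paper proves them by induction on the number of factors, using the twist formula $c_t^B(\mathcal{F}\otimes\mathcal{L})=c_t^B(\mathcal{L})^n c^B_\tau(\mathcal{F})$ of Example \ref{line} (Lemmas \ref{vanishing} and \ref{factor}). You instead expand $z_1\cdots z_i=\sum_S(-1)^{|S|}\bigl[\bigotimes_{k\in S}\mathcal{L}_k^\vee\bigr]$, take $\log c_t^B$, and kill everything below degree $i$ with the identity $\sum_{S\supseteq T}(-1)^{|S|}=0$ for $T\neq\{1,\ldots,i\}$; this yields both lemmas in one stroke and is arguably cleaner. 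One caveat: $B^i$ can have torsion, so the $\log/\exp$ manipulation is only legitimate once you observe that what you are proving is an identity of universal polynomials with integer coefficients in $L_1,\ldots,L_i$, hence verifiable in the torsion-free ring $\mathbb{Z}[L_1,\ldots,L_i]$ before specializing. That is a one-line fix, but it needs to be said.

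The real gap is the step you yourself flag: showing that the pullback to $P$ of a generator $\gamma^{i_1}(x_1)\cdots\gamma^{i_m}(x_m)$ of $F^i_\gamma(X)$ is an integral combination of products of at least $i$ factors of the form $[\mathcal{L}]-1$. Writing each $x_k$ as a rank-zero combination of line bundle classes is not by itself enough; you must then compute $\gamma^{i_k}$ of such a combination. The paper does this with the generating function $\gamma_t\bigl(\sum_j([\mathcal{L}_j]-1)-\sum_j([\mathcal{L}'_j]-1)\bigr)=\prod_j(1+([\mathcal{L}_j]-1)t)\big/\prod_j(1+([\mathcal{L}'_j]-1)t)$, whose $t^{i_k}$-coefficient is visibly a $\mathbb{Z}$-combination of monomials of degree exactly $i_k$ in the classes $[\mathcal{L}]-1$, $[\mathcal{L}']-1$; multiplying these out over $k$ produces monomials of length $\sum i_k\geq i$, after which your vanishing and factorization computations apply termwise (and the monomials of length $>i$ also lie in $F^{i+1}_\gamma$, which is what makes $b^i_\gamma\circ c_i^B$ come out right). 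This reduction is short but it is the load-bearing step of the whole theorem, so it cannot be left as ``careful bookkeeping''; with it supplied, your argument closes up and agrees with the paper's.
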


The proof of Theorem \ref{grr} can be reduced to an essentially combinatorial argument. The main computations needed for this argument are given as Lemma \ref{vanishing} and Lemma \ref{factor} below. Before giving the proof, however, we mention some immediate consequences of Theorem \ref{grr}.

In the following, we write $G(X)$ for the Grothendieck group of coherent sheaves on $X$. When $X$ is equidimensional (e.g.\ when $X$ is an integral variety) the group $G(X)$ comes equipped with a filtration $F_\tau^\bullet(X)\subset G(X)$ whose term $F^i_\tau(X)$ is defined as the subgroup of $G(X)$ generated by coherent sheaves whose support has codimension-$i$ or greater. Typically, $F_\tau^\bullet(X)$ is called either the topological filtration or the coniveau filtration of $G(X)$.

Similar to our notation with the $\gamma$-filtration, we write $\mathrm{gr}_\tau G(X)$ for the graded group associated to the topological filtration. If $X$ is a smooth variety, then the group $\mathrm{gr}_\tau G(X)$ is even a ring. This is because, for a smooth variety $X$, the group $G(X)$ is isomorphic to the group $K(X)$ via the canonical morphism $K(X)\rightarrow G(X)$ sending the class of a locally free sheaf to the class of itself. Under this isomorphism, the group $G(X)$ inherits a ring structure for which the filtration $F^\bullet_\tau(X)$ is multiplicative.

In one formulation, the Grothendieck-Riemann-Roch theorem without denominators (see \cite[Expos\'e XIV]{MR0354655} or \cite[Example 15.2.16]{MR801033}) is the statement that there exists both a surjection of graded rings $\varphi_X:\mathrm{CH}(X)\rightarrow \mathrm{gr}_\tau G(X)$ for any smooth variety $X$, where $\mathrm{CH}(X)$ is the Chow ring of cycles on $X$ modulo rational equivalence, and a homomorphism $c_i:\mathrm{gr}_\tau G(X)\rightarrow \mathrm{CH}^i(X)$ for any $i\geq 0$ induced by the $i$th Chern class $c_i^{\mathrm{CH}}:K\rightarrow \mathrm{CH}^i$, so that for $i>0$ both of the compositions on degree-$i$ summands
\[c_i\circ \varphi_X^i=(-1)^{i-1}(i-1)!\quad \mbox{and}\quad \varphi_X^i\circ c_i=(-1)^{i-1}(i-1)!\] are both multiplication by $(-1)^{i-1}(i-1)!$. Together with Theorem \ref{grr} this implies:

\begin{cor}\label{cor}
Let $X$ be a smooth and connected variety. For each integer $i\geq 0$ there exists a commuting diagram like the following one.
\[\tag{Di}\begin{tikzcd}
B^i(X)\arrow{r}{b^i_{\mathrm{CH}}}\arrow[shift left=1]{d}{b^i_\gamma}\arrow{dr}{b_{\tau}^i} & \mathrm{CH}^i(X)\arrow[shift left=1]{d}\\ \mathrm{gr}_\gamma^i K(X)\arrow{r} & \mathrm{gr}^i_\tau G(X)
\end{tikzcd}\] Moreover, these commuting diagrams have the following properties:
\begin{enumerate}[\normalfont (1)]
\item all of the arrows in the diagrams \emph{(D0)} and \emph{(D1)} are isomorphisms;
\item if $\mathrm{dim}(X)\leq 2$, then every arrow of \emph{(D2)} is an isomorphism;
\item for any $i\geq 0$, all of the arrows in \emph{(Di)} become isomorphisms after tensoring with $\mathbb{Q}$.
\end{enumerate}
\end{cor}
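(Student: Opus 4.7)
The plan is to assemble the diagram (Di) and then to deduce each isomorphism claim from Theorem \ref{grr} together with the classical Grothendieck--Riemann--Roch without denominators of \cite[Expos\'e XIV]{MR0354655}. I would first obtain the maps $b^i_\gamma$, $b^i_\tau$, and $b^i_{\mathrm{CH}}$ as the degree-$i$ components of the canonical morphisms $B\to\mathrm{gr}_\gamma K$, $B\to\mathrm{gr}_\tau G$, and $B\to\mathrm{CH}$ from Proposition \ref{up}; for the latter two this requires one to verify that, on the category of smooth varieties, both $\mathrm{gr}_\tau G$ and $\mathrm{CH}$ satisfy axioms (1)--(5), which is classical (on a smooth variety the canonical $K\to G$ is an isomorphism, so $F^\bullet_\tau$ becomes a multiplicative filtration). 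The map $\varphi^i_X:\mathrm{CH}^i(X)\to\mathrm{gr}^i_\tau G(X)$ is the standard cycle-to-sheaf surjection recalled in the paragraph preceding the corollary, and $\mathrm{gr}^i_\gamma K(X)\to\mathrm{gr}^i_\tau G(X)$ is induced by the inclusion $F^i_\gamma\subseteq F^i_\tau$. Commutativity of (Di) then reduces to the fact that each arrow sends Chern classes to Chern classes and that $B$ is universal for this property.

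For (D0), since $X$ is connected every group is canonically $\mathbb{Z}$ and every arrow is the identity. For (D1), I would identify each of the four groups with $\mathrm{Pic}(X)$ via the first Chern class: for $B^1(X)$ in particular, axiom (4) of Proposition \ref{up} makes $\mathcal{L}\mapsto c^B_1(\mathcal{L})$ a well-defined homomorphism $\mathrm{Pic}(X)\to B^1(X)$, surjectivity follows from the identity $c^B_1(\mathcal{F})=c^B_1(\det\mathcal{F})$ (which one checks after pulling back along a chain of projective bundles splitting $\mathcal{F}$ and invoking injectivity in axiom (5)), and injectivity is inherited from $\mathrm{CH}^1(X)\cong\mathrm{Pic}(X)$ through $b^1_{\mathrm{CH}}$. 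The corresponding identifications for $\mathrm{CH}^1$, $\mathrm{gr}^1_\gamma K$, and $\mathrm{gr}^1_\tau G$ are standard.

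For (D2) with $\dim(X)\leq 2$, Theorem \ref{grr} at $i=2$ gives $c_2^B\circ b^2_\gamma=b^2_\gamma\circ c_2^B=-1$, so $b^2_\gamma$ is an isomorphism; the analogous argument using GRR without denominators (whose factor at $i=2$ is also the unit $-1$) shows $\varphi^2_X$ is an isomorphism. The remaining horizontal arrow $\mathrm{gr}^2_\gamma K(X)\to\mathrm{gr}^2_\tau G(X)$ is an isomorphism by a classical result on regular surfaces (see \cite[Expos\'e XIV]{MR0354655}), and commutativity then forces both $b^2_{\mathrm{CH}}$ and $b^2_\tau$ to be isomorphisms. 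For part (3), after tensoring with $\mathbb{Q}$ the integer $(-1)^{i-1}(i-1)!$ becomes a unit for every $i\geq 1$, so Theorem \ref{grr} gives that $b^i_\gamma\otimes\mathbb{Q}$ is an isomorphism and GRR without denominators gives that $\varphi^i_X\otimes\mathbb{Q}$ is an isomorphism, while the rational iso $\mathrm{gr}^i_\gamma K\otimes\mathbb{Q}\xrightarrow{\sim}\mathrm{gr}^i_\tau G\otimes\mathbb{Q}$ is the classical Chern character identification; commutativity then closes the loop.

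The main obstacle I expect is the verification of axioms (1)--(5) of Proposition \ref{up} for $\mathrm{gr}_\tau G$ and $\mathrm{CH}$ on smooth varieties (especially projective bundle injectivity), together with the bookkeeping needed to establish commutativity of (Di); once these foundational checks are in place, the stated isomorphisms fall out rather mechanically from Theorem \ref{grr} and GRR without denominators, given the observation that $(-1)^{i-1}(i-1)!$ is a unit in the relevant coefficient ring for $i\leq 2$ integrally and for all $i$ after rationalizing.
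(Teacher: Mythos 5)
Your proposal is correct and follows essentially the same route as the paper: assemble (Di) from the universal property of $B$, the GRR without denominators surjection $\varphi_X^i$, and the inclusion $F_\gamma^\bullet\subseteq F_\tau^\bullet$, then observe that $(-1)^{i-1}(i-1)!$ is a unit for $i\leq 2$ integrally and for all $i$ rationally, and propagate isomorphisms around the commuting square. The only (harmless) deviation is your hands-on identification of the degree-one groups with $\mathrm{Pic}(X)$, where the paper simply applies Theorem \ref{grr} at $i=1$, and your citation for $\mathrm{gr}^2_\gamma K\cong\mathrm{gr}^2_\tau G$ when $\dim(X)\leq 2$, where the paper points to Karpenko rather than SGA6.
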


\begin{proof}
In the diagram (Di), the unlabeled vertical arrow is the degree-$i$ summand $\varphi^i_X$ of the surjection $\varphi_X$ from the Grothendieck-Riemann-Roch without denominators; it follows from this theorem that $\varphi_X^0$, $\varphi_X^1$, and $\varphi_X^2$ are isomorphisms while $\varphi_X^i$ has torsion kernel in general. 
	
The unlabeled horizontal arrow is induced by the isomorphism $K(X)\rightarrow G(X)$ described in the paragraphs above the corollary statement. Indeed, when one identifies the two rings under this isomorphism then there is an inclusion of filtrations $F_\gamma^i(X)\subset F_\tau^i(X)$ for all $i\geq 0$. For $i=0$ or $i=1$, the induced map on the associated graded groups is known to be an isomorphism; if $\mathrm{dim}(X)\leq 2$, then this holds also for $i=2$, see \cite[Proposition 2.14]{MR1615533}. For arbitrary $i\geq 0$, this arrow is known only to have torsion kernel and cokernel, see \cite[Exspos\'e XIV, \S4]{MR0354655}.

The arrows labeled $b^i_{\mathrm{CH}}$, $b^i_\gamma$, and $b^i_{\tau}$ are the maps guaranteed from the universal property Proposition \ref{up} of the functor $B$ (with the caveat that one restricts to the category of smooth varieties; see Remark \ref{catchall}). For each $i\leq 2$, the map $b^i_\gamma$ is an isomorphism by Theorem \ref{grr}. For arbitrary $i\geq 0$, the same theorem shows that the kernel of $b^i_\gamma$ is torsion. This implies that the morphisms $b^i_{\mathrm{CH}}$ and $b^i_\tau$ are isomorphisms in each of the cases (1)-(3) as well.
\end{proof}

\begin{rmk}
For general $i\geq 0$, the maps $b^i_{\mathrm{CH}}$ in the diagram (Di) of Corollary \ref{cor} need not be injective or surjective. If $i=2$, then it's known that the map $b^i_{\mathrm{CH}}$ is surjective but not necessarily injective. As an example of a variety where $b^2_{\mathrm{CH}}$ has nontrivial kernel one can take a Severi--Brauer variety $X=\mathbf{SB}(A)$ associated to division algebra $A$ with $\mathrm{ind}(A)=p^2$, for some odd prime $p$, that decomposes $A=A_1\otimes A_2$ into the tensor product of two division algebras $A_1,A_2$ each of index $p$. In this case $$B^2(X)=\mathrm{gr}^2_\gamma K(X)= \mathbb{Z}\oplus \mathbb{Z}/p\mathbb{Z}$$ contains nontrivial torsion because of \cite[Proposition 4.13]{MR1615533} but, $\mathrm{CH}^2(X)=\mathbb{Z}$ is torsion free by \cite[Proposition 5.3]{MR1615533}.

For an example of a smooth and projective variety $X$ where the map $b^i_{\mathrm{CH}}$ is not surjective for some $i>2$, one can take $X=G/B$ to be the variety of complete flags contained in the group $\mathrm{O}^+(n)$ for any $n\geq 7$ and with $i=\mathrm{dim}(X)$, see \cite[Expos\'e XIV, \S4, 4.5]{MR0354655}.

For an example where $b^3_\gamma$ is not injective, one can take $X$ to be a $4$-approximation of the classifying space $B\mathrm{O}^+(2n+1)$ for any $n\geq 1$. Indeed, by \cite[Theorem 5.5]{10.1093/imrn/rnz049} the map $b^i_{\mathrm{CH}}$ for this $X$ is surjective for all $i\geq 0$ (so that the $\gamma$-filtration and the topological filtration coincide, see \cite[Proposition 3.3 (2)]{10.1093/imrn/rnz049}) but the map $\varphi_X^3$ is not an isomorphism.
\end{rmk}

\begin{rmk}
	Let $X$ be a smooth and connected variety such that $\mathrm{CH}(X)$ is generated by Chern classes. In this case, Theorem \ref{grr} can be used to recover the standard Grothendieck-Riemann-Roch theorem without denominators \cite[Example 15.2.16]{MR801033} for this $X$.
	
	Indeed, if $\mathrm{CH}(X)$ is generated by Chern classes then the canonical morphism $$\mathrm{gr}_\gamma K(X)\rightarrow \mathrm{gr}_\tau G(X)$$ is an isomorphism, see \cite[Proposition 3.3]{10.1093/imrn/rnz049}. One can then check that the compositions \[c_i\circ \varphi_X^i\quad \mbox{and}\quad \varphi_X^i\circ c_i\] are both multiplication by $(-1)^{i-1}(i-1)!$ by going around the outside of the commutative square below
	\[\begin{tikzcd}
	B^i(X)\arrow{r}{b^i_{\mathrm{CH}}}\arrow[shift left=1]{d}{b^i_\gamma} & \mathrm{CH}^i(X)\arrow[shift left=1]{d}{\varphi_X^i}\\ \mathrm{gr}_\gamma^i K(X)\arrow[equals]{r}\arrow[shift left=1]{u}{c_i^B} & \mathrm{gr}^i_\tau G(X)\arrow[shift left=1]{u}{c_i}
	\end{tikzcd}\] and applying Theorem \ref{grr}
\end{rmk}

Now we return to the proof of Theorem \ref{grr}.

\begin{lem}\label{vanishing}
Let $\mathcal{L}_1,...,\mathcal{L}_{i+1}$ be $i+1$ invertible sheaves on some scheme $P$ which can be realized as a chain of projective bundles over $X$. Then $$c_k^B\left(\prod_{j=1}^{i+1} (\mathcal{L}_j-1)\right)=0$$ inside of $B(P)$ for all $0<k\leq i$.
\end{lem}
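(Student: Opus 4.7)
The plan is to compute $c_t^B(x)$ for $x = \prod_{j=1}^{i+1}(\mathcal{L}_j - 1) \in K(P)$ directly as a formal power series in $1 + tB(P)[[t]]$ and show that its coefficients in degrees $1, \ldots, i$ vanish. The only structural inputs I need are the homomorphism property of $c_t^B$ and the additivity of the first Chern class on line bundles (Proposition \ref{up}(4)); the hypothesis on $P$ is not used directly.

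\emph{Step one: expand the product.} Using $[\mathcal{L}_j][\mathcal{L}_k] = [\mathcal{L}_j \otimes \mathcal{L}_k]$ in $K(P)$, I expand
\begin{equation*}
x = \sum_{S \subseteq \{1,\ldots,i+1\}} (-1)^{i+1-|S|} [\mathcal{L}_S], \qquad \mathcal{L}_S := \bigotimes_{j\in S} \mathcal{L}_j,
\end{equation*}
interpreting $\mathcal{L}_\emptyset = \mathcal{O}_P$. Each $\mathcal{L}_S$ is invertible, so $c_t^B(\mathcal{L}_S) = 1 + s_S t$, where $s_S := \sum_{j \in S} c_1^B(\mathcal{L}_j)$ by additivity. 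Applying the homomorphism $c_t^B$ then gives
\begin{equation*}
c_t^B(x) = \prod_{S \subseteq \{1,\ldots,i+1\}} (1 + s_S t)^{(-1)^{i+1-|S|}}.
\end{equation*}

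\emph{Step two: take logarithms.} Working inside $1 + tB(P)[[t]]$, taking the formal logarithm converts the problem to a question about the alternating power sums
\begin{equation*}
p_m := \sum_{S \subseteq \{1,\ldots,i+1\}} (-1)^{i+1-|S|} s_S^m,
\end{equation*}
since $\log c_t^B(x) = \sum_{m \geq 1} \tfrac{(-1)^{m-1}}{m} p_m t^m$. It suffices to show $p_m = 0$ for $1 \leq m \leq i$, for then $\log c_t^B(x) \in t^{i+1} B(P)[[t]]$, hence $c_t^B(x) - 1 \in t^{i+1}B(P)[[t]]$ as well, which is exactly the desired vanishing of $c_k^B(x)$ for $0 < k \leq i$.

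\emph{Step three: the combinatorial identity.} This is the main (and essentially only) computation. Writing $\ell_j := c_1^B(\mathcal{L}_j)$ and expanding $s_S^m$ via the multinomial theorem, the coefficient of $\prod_j \ell_j^{\alpha_j}$ (with $\sum_j \alpha_j = m$) in $p_m$ is a nonzero multiple of $\sum_{S \supseteq \mathrm{supp}(\alpha)} (-1)^{i+1-|S|}$. By inclusion-exclusion this inner sum vanishes unless $\mathrm{supp}(\alpha) = \{1,\ldots,i+1\}$, which forces $m = \sum_j \alpha_j \geq i+1$. For $m \leq i$ no such $\alpha$ exists, so every coefficient is zero and $p_m = 0$. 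This step is elementary combinatorics, and I do not foresee a serious obstacle; the whole difficulty of the lemma is just recognizing that the vanishing of alternating power sums is the right reformulation.
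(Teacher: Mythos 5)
Your argument is correct in substance but takes a genuinely different route from the paper's. The paper proves the lemma by induction on the number of factors: the base case is the direct computation $c_1^B((\mathcal{L}_1-1)(\mathcal{L}_2-1))=0$ from additivity of $c_1^B$, and the inductive step writes $\prod_{j=1}^{i}(\mathcal{L}_j-1)=[\mathcal{F}]-[\mathcal{G}]$ and uses the substitution formula $c_t^B(\mathcal{F}\otimes \mathcal{L})=c_t^B(\mathcal{L})^nc^B_\tau(\mathcal{F})$ of Example \ref{line} to show that multiplying by one more factor $(\mathcal{L}_{i+1}-1)$ raises the order of vanishing of $c_t^B-1$ by one. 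You instead expand the product over subsets to get the closed form $c_t^B(x)=\prod_S(1+s_St)^{(-1)^{i+1-|S|}}$ and reduce to the vanishing of the alternating power sums $p_m$ for $m\leq i$, which your inclusion--exclusion computation in step three establishes correctly. Your route is more self-contained, using only additivity of $c_1^B$ on invertible sheaves and the homomorphism property of $c_t^B$, and bypassing Example \ref{line} (hence the splitting principle) entirely; the paper's induction, on the other hand, produces exactly the expression from which Lemma \ref{factor} is then read off in one line, so the two lemmas come for the price of one. (Your expansion also recovers Lemma \ref{factor} if you push the power-sum computation to $m=i$ with $i$ factors, where $p_i=i!\,\ell_1\cdots\ell_i$.)

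One point needs repair. The formal logarithm is not defined on $1+tB(P)[[t]]$: the ring $B(P)$ is a $\mathbb{Z}$-algebra that need not contain $\mathbb{Q}$ and may well have torsion, so as literally written your step two only shows that $c_k^B(x)$ vanishes in $B(P)\otimes\mathbb{Q}$, i.e.\ is torsion. The fix is routine: carry out steps two and three in the universal torsion-free ring $\mathbb{Z}[\ell_1,\ldots,\ell_{i+1}]$, where the congruence $\prod_S(1+s_St)^{(-1)^{i+1-|S|}}\equiv 1 \pmod{t^{i+1}}$ may be verified after tensoring with $\mathbb{Q}$ via the logarithm and then holds integrally by faithful flatness of $\mathbb{Z}\hookrightarrow\mathbb{Q}$ on torsion-free modules; then specialize $\ell_j\mapsto c_1^B(\mathcal{L}_j)$ along the evident ring homomorphism to $B(P)$. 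With that adjustment the proof is complete.
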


\begin{proof}
We proceed by induction on the length $i+1$ of the product. For our base case, we observe that \begin{multline*}c_1^B((\mathcal{L}_1-1)(\mathcal{L}_2-1))=c_1^B(\mathcal{L}_1\otimes \mathcal{L}_2-\mathcal{L}_1-\mathcal{L}_2+1)\\=c_1^B(\mathcal{L}_1)+c_1^B(\mathcal{L}_2)-c_1^B(\mathcal{L}_2)-c_1^B(\mathcal{L}_1)+c_1^B(1)=0.\end{multline*} For our induction hypothesis, we assume the Chern class $c_k^B$ of any product of such elements of length $i$ vanishes for all $k\leq i-1$. Let $\prod_{j=1}^i (\mathcal{L}_j-1)=[\mathcal{F}]-[\mathcal{G}]$. Then
\begin{align*}
c_t^B\left(\prod_{j=1}^{i+1} (\mathcal{L}_{j}-1)\right)&= c_t^B((\mathcal{F}-\mathcal{G})(\mathcal{L}_{i+1}-1))\\
&=\frac{c_t^B(\mathcal{F}\otimes \mathcal{L}_{i+1})}{c_t^B(\mathcal{G}\otimes \mathcal{L}_{i+1})c_t^B(\mathcal{F}-\mathcal{G})}\\
&=\frac{c^B_\tau(\mathcal{F})}{c^B_\tau(\mathcal{G})c_t^B(\mathcal{F}-\mathcal{G})} \tag*{(using $\tau=\frac{t}{1+c_1^B(\mathcal{L}_{i+1})t}$, c.f.\ Example \ref{line})}\\
&=\frac{c^B_\tau(\mathcal{F}-\mathcal{G})}{c_t^B(\mathcal{F}-\mathcal{G})}\\
&= \frac{1+c^B_i(\mathcal{F}-\mathcal{G})\tau^i + c^B_{i+1}(\mathcal{F}-\mathcal{G})\tau^{i+1}+\cdots}{1+c^B_i(\mathcal{F}-\mathcal{G})t^i+c^B_{i+1}(\mathcal{F}-\mathcal{G})t^{i+1}+\cdots}\tag*{(by induction hypothesis)}\\
&=1-ic^B_{i}(\mathcal{F}-\mathcal{G})c^B_1(\mathcal{L}_{i+1})t^{i+1}+\cdots
\end{align*} as claimed.
\end{proof}

\begin{lem}\label{factor}
Let $\mathcal{L}_1,...,\mathcal{L}_{i}$ be $i$ invertible sheaves on some scheme $P$ which can be realized as a chain of projective bundles over $X$. Then $$c_i^B\left( \prod_{j=1}^i (\mathcal{L}_j-1)\right)=(-1)^{i-1}(i-1)!\prod_{j=1}^i c_1^B(\mathcal{L}_j)$$ inside of $B(P)$.
\end{lem}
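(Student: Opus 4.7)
The plan is to prove the lemma by induction on $i$, recycling the generating-function manipulation that was already carried out in the proof of Lemma \ref{vanishing}. Indeed, the final line of that proof essentially computes the degree-$(i+1)$ coefficient of $c_t^B\bigl(\prod_{j=1}^{i+1}(\mathcal{L}_j-1)\bigr)$ in terms of the degree-$i$ Chern class of the product of length $i$, so Lemma \ref{factor} should fall out of recognizing what that coefficient says.

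For the base case $i=1$, the additivity property (4) of Proposition \ref{up} gives $c_1^B(\mathcal{L}_1-1)=c_1^B(\mathcal{L}_1)-c_1^B(\mathcal{O}_P)=c_1^B(\mathcal{L}_1)$, which equals $(-1)^{0}\cdot 0!\cdot c_1^B(\mathcal{L}_1)$. For the inductive step, suppose the formula holds for products of length $i$. Writing $[\mathcal{F}]-[\mathcal{G}]=\prod_{j=1}^i(\mathcal{L}_j-1)$, the calculation reproduced in Lemma \ref{vanishing} (via Example \ref{line}) yields
$$c_t^B\!\left(\prod_{j=1}^{i+1}(\mathcal{L}_j-1)\right)=\frac{c_\tau^B(\mathcal{F}-\mathcal{G})}{c_t^B(\mathcal{F}-\mathcal{G})},\qquad \tau=\frac{t}{1+c_1^B(\mathcal{L}_{i+1})t}.$$
By Lemma \ref{vanishing} applied to $\mathcal{L}_1,\dots,\mathcal{L}_i$, the Chern classes $c_k^B(\mathcal{F}-\mathcal{G})$ vanish for $1\le k\le i-1$, so the lowest nontrivial term of both numerator and denominator occurs in degree $i$. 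As in the computation at the end of Lemma \ref{vanishing}'s proof, the $t^i$ terms cancel, and the $t^{i+1}$ coefficient of the ratio equals $-i\,c_i^B(\mathcal{F}-\mathcal{G})\,c_1^B(\mathcal{L}_{i+1})$, coming from the expansion $\tau^i=t^i-i\,c_1^B(\mathcal{L}_{i+1})t^{i+1}+O(t^{i+2})$ (the $\tau^{i+1}$ and $t^{i+1}$ contributions from the possibly nonzero $c_{i+1}^B(\mathcal{F}-\mathcal{G})$ cancel to this order).

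Substituting the inductive hypothesis $c_i^B(\mathcal{F}-\mathcal{G})=(-1)^{i-1}(i-1)!\prod_{j=1}^i c_1^B(\mathcal{L}_j)$ gives
$$c_{i+1}^B\!\left(\prod_{j=1}^{i+1}(\mathcal{L}_j-1)\right)=-i\cdot(-1)^{i-1}(i-1)!\prod_{j=1}^{i+1}c_1^B(\mathcal{L}_j)=(-1)^{i}\,i!\prod_{j=1}^{i+1}c_1^B(\mathcal{L}_j),$$
which is precisely the formula with $i$ replaced by $i+1$.

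The only real obstacle is the bookkeeping when extracting the $t^{i+1}$ coefficient from the ratio $c_\tau^B/c_t^B$, where one has to be careful that the substitution $\tau=t/(1+c_1^B(\mathcal{L}_{i+1})t)$ only shifts the degree-$i$ term of the numerator, while contributions from higher-degree Chern classes of $\mathcal{F}-\mathcal{G}$ (which need not vanish) cancel between numerator and denominator at this order. Since Lemma \ref{vanishing} already carried out this expansion to exactly the needed order, no new manipulation is required — the content of Lemma \ref{factor} is just the identification of that leading coefficient via the inductive hypothesis.
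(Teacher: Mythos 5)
Your proof is correct and is essentially the paper's own argument: the paper's proof consists of the single instruction to expand $c_i^B(\mathcal{F}-\mathcal{G})$ in the final expression of the proof of Lemma \ref{vanishing}, which is precisely the induction you carry out (base case $c_1^B(\mathcal{L}_1-1)=c_1^B(\mathcal{L}_1)$, inductive step via the coefficient $-i\,c_i^B(\mathcal{F}-\mathcal{G})\,c_1^B(\mathcal{L}_{i+1})$ of $t^{i+1}$). Your version just makes the bookkeeping explicit.
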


\begin{proof}
Expand the term $c^B_i(\mathcal{F}-\mathcal{G})$ in the last expression of the proof of Lemma \ref{vanishing}.
\end{proof}

\begin{proof}[Proof of Theorem \ref{grr}]
Let $x\in F_\gamma^i(X)$ be an element of the $i$th piece of the $\gamma$-filtration on $K(X)$ for some $i\geq 1$. The proof will be complete if we can show that there is a scheme $P$, that can be realized as a chain of projective bundles over $X$, with the property that the pullback of $x$ to $K(P)$ can be written as a sum or difference of monomials of the form $(\mathcal{L}_1-1)\cdots (\mathcal{L}_j-1)$ for some $j\geq i$. Indeed, assuming that this is the case, there is a commuting square
\[\begin{tikzcd}F_\gamma^i(X)\arrow{r}\arrow{d}{c_i^B} & K(P)\arrow{d}{c_i^B}\\ B^i(X)\arrow{r} & B^i(P) \end{tikzcd}\] where the horizontal pullback morphisms are injections. We find $$c_t^B(x)= c_t^B\left(\sum_{m=0}^k\left(\pm\prod_{j=1}^{n_m} (\mathcal{L}_{m_j}-1)\right)\right)=\prod_{m=0}^k c_t^B(\prod_{j=1}^{n_m}(\mathcal{L}_{m_j}-1))^{\pm 1}.$$ The latter factors vanish whenever $n_m> i$ by Lemma \ref{vanishing} while the latter factors are equal $$1+(-1)^{i-1}(i-1)!\left(\pm\prod_{j=1}^{n_m} c_1^B(\mathcal{L}_{m_j})\right)t^i+\cdots$$ whenever $n_m=i$ by Lemma \ref{factor}. Since $$b^i_\gamma(c_1^B(\mathcal{L}_j))=b^i_\gamma(-c_1^B(\mathcal{L}_j^\vee))=-b^i_\gamma(c_1^B(\mathcal{L}_j^\vee))=\mathcal{L}_j-1,$$ where we use Remark \ref{dual} for the first equality, the proof is completed once we can show our starting assumption.

To do this, we start by writing $$x=\sum_{m=0}^k\left(\pm\prod_{j=1}^{n_m} \gamma^{m_j}(x_{m_j})\right)$$ for some elements $x_{m_j}$ in $F_\gamma^1(X)$. Note that we can focus on a single monomial since, if we prove that a monomial can be written in the desired way then the same fact follows for the sum. So assume that there's an expression $x=\gamma^{n_1}(x_1)\cdots \gamma^{n_j}(x_j)$ for some $n_1+\cdots + n_j \geq i$. Each $x_k$, belonging to $F_\gamma^1(X)$, can be written as $$x_k=[\mathcal{F}]-[\mathcal{G}]=[\mathcal{F}]-\mathrm{rk}(\mathcal{F})-([\mathcal{G}]-\mathrm{rk}(\mathcal{G}))$$ for some $\mathcal{F},\mathcal{G}$ that depend on $k$.

Now there is a scheme $P$ which can be realized as a chain of projective bundles over $X$ such that each of the $\mathcal{F},\mathcal{G}$'s can be written $$x_k=[\mathcal{F}]-\mathrm{rk}(\mathcal{F})-([\mathcal{G}]-\mathrm{rk}(\mathcal{G}))=(\mathcal{L}_1+\cdots +\mathcal{L}_n-n)-(\mathcal{L}'_1+\cdots + \mathcal{L}'_n-n)$$ with the $(\mathcal{L})$'s and $(\mathcal{L}')$'s depending on $k$ still. Another way to say this is that we can find such a $P$ so that, for every $k$, we have an expression like $$x_k=(\mathcal{L}_1-1)+\cdots + (\mathcal{L}_n-1) -(\mathcal{L}'_1-1)-\cdots - (\mathcal{L}'_n-1).$$ Finally, applying the operation $\gamma_t=\sum_{j\geq 0}\gamma^j t^j$ we find
\begin{align*}\gamma_t(x_k) &=\gamma_t\left(\sum_{j=1}^n (\mathcal{L}_j-1)-\sum_{j=1}^n(\mathcal{L}_j'-1)\right)\\
& =\frac{\gamma_t\left(\sum_{j=1}^n (\mathcal{L}_j-1)\right)}{\gamma_t\left(\sum_{j=1}^n(\mathcal{L}_j'-1)\right)}\\
&=\frac{\sum_{j\geq 0} \sigma_j t^j}{\sum_{j\geq 0} \sigma'_j t^j}
\end{align*}
where $\sigma_j$ is the $j$th elementary symmetric polynomial in the variables $(\mathcal{L}_1-1),...,(\mathcal{L}_n-1)$ and similarly for $\sigma'_j$ with $(\mathcal{L}'_1-1),...,(\mathcal{L}'_n-1)$. Expanding this series in $t$ we find that $\gamma^{m_k}(x_k)$ is a polynomial, homogeneous and symmetric in variables like $(\mathcal{L}-1)$, of degree $m_k$. But, this completes the proof since we've shown that there is a scheme $P$ which can be realized as a chain of projective bundles over $X$ such that $x$ can be written in the desired form.
\end{proof}
\bibliographystyle{amsalpha}
\bibliography{bib3}

\providecommand{\bysame}{\leavevmode\hbox to3em{\hrulefill}\thinspace}
\providecommand{\MR}{\relax\ifhmode\unskip\space\fi MR }
\providecommand{\MRhref}[2]{%
  \href{http://www.ams.org/mathscinet-getitem?mr=#1}{#2}
}
\providecommand{\href}[2]{#2}
\begin{thebibliography}{MR071}

\bibitem[EH16]{MR3617981}
D.~Eisenbud and J.~Harris, \emph{3264 and all that---a second course in
  algebraic geometry}, Cambridge University Press, Cambridge, 2016.
  \MR{3617981}

\bibitem[FL85]{MR801033}
W.~Fulton and S.~Lang, \emph{Riemann-{R}och algebra}, Grundlehren der
  Mathematischen Wissenschaften [Fundamental Principles of Mathematical
  Sciences], vol. 277, Springer-Verlag, New York, 1985. \MR{801033}

\bibitem[Ful98]{MR1644323}
W.~Fulton, \emph{Intersection theory}, second ed., Ergebnisse der Mathematik
  und ihrer Grenzgebiete. 3. Folge. A Series of Modern Surveys in Mathematics
  [Results in Mathematics and Related Areas. 3rd Series. A Series of Modern
  Surveys in Mathematics], vol.~2, Springer-Verlag, Berlin, 1998. \MR{1644323}

\bibitem[Kar98]{MR1615533}
N.~Karpenko, \emph{Codimension {$2$} cycles on {S}everi-{B}rauer varieties},
  $K$-Theory \textbf{13} (1998), no.~4, 305--330. \MR{1615533}

\bibitem[KM18]{KE}
N.~Karpenko and E.~Mackall, \emph{On the {K}-theory coniveau epimorphism for
  products of {S}everi-{B}rauer varieties}, Preprint (2018), Available on
  author's webpage:
  \url{https://sites.ualberta.ca/~mackall/Preprints/Chern%20subring.pdf}, 21
  pages.

\bibitem[KM19]{10.1093/imrn/rnz049}
N.~Karpenko and A.~Merkurjev, \emph{{Chow Filtration on Representation Rings of
  Algebraic Groups}}, International Mathematics Research Notices (2019),
  rnz049.

\bibitem[Man69]{MR0265355}
J.~Manin, \emph{Lectures on the {$K$}-functor in algebraic geometry}, Uspehi
  Mat. Nauk \textbf{24} (1969), no.~5 (149), 3--86. \MR{0265355}

\bibitem[MR071]{MR0354655}
\emph{Th\'eorie des intersections et th\'eor\`eme de {R}iemann-{R}och}, Lecture
  Notes in Mathematics, Vol. 225, Springer-Verlag, Berlin-New York, 1971,
  S\'eminaire de G\'eom\'etrie Alg\'ebrique du Bois-Marie 1966--1967 (SGA 6),
  Dirig\'e par P. Berthelot, A. Grothendieck et L. Illusie. Avec la
  collaboration de D. Ferrand, J. P. Jouanolou, O. Jussila, S. Kleiman, M.
  Raynaud et J. P. Serre. \MR{0354655}

\end{thebibliography}
\end{document}